      \theoremstyle{plain}
      \newtheorem{theorem}{Theorem}[section]
      \newtheorem{Example}[theorem]{Example}
      \newtheorem{Proposition}[theorem]{Proposition}
      \theoremstyle{definition}
      \theoremstyle{remark}
      \newtheorem{remark}[theorem]{Remark}
      \newcommand{\R}{{\mathbb R}}
      \numberwithin{equation}{section}
      \def\@setcopyright{}
      \def\serieslogo@{}
\begin{document}

   \author{V. Maume-Deschamps}
   \address{Université de Lyon, Université Lyon 1,France, Institut Camille Jordan UMR 5208}
   \email{veronique.maume@univ-lyon1.fr}


   \author{D. Rulli\`ere}

   \address{Université de Lyon, Université Lyon 1,France\\
    Laboratoire SAF EA 2429}

   \email{didier.rulliere@univ-lyon1.fr}
   \author{K. Said}
      \address{Université de Lyon, Université Lyon 2,France, Laboratoires SAF EA 2429 \& COACTIS EA 4161 }
      \email{khalil.said@univ-lyon2.fr}
   

   \title[The Infinite]{Impact of dependence on some multivariate risk indicators}

   \begin{abstract}
    The minimization of some multivariate risk indicators may be used as an allocation method, as proposed in Cénac et al.~\cite{AR2}. The aim of capital allocation is to choose a point in a simplex, according to a given criterion. In a previous paper~\cite{P12015} we proved that the proposed allocation technique satisfies a set of coherence axioms. In the present one, we study the properties and asymptotic behavior of the allocation for some distribution models. We analyze also the impact of the dependence structure on the allocation using some copulas. 
   \end{abstract}

    \keywords{Multivariate risk indicators, dependence modeling, sub-exponential distributions, risk theory, optimal capital allocation, copulas.}
   \subjclass[2010]{62H00,
   62P05,
   91B30
   }
   \date{\today}
   \maketitle
   \section*{Introduction}
   
   Natural phenomena, financial events and risks are usually modeled through random vectors or processes. In these fields, considering dependencies between random variables is necessary. In actuarial science, this issue has led to the development of multivariate risk theories. Indeed, an insurance company is generally exposed to several risks which cannot be assumed to be independent. It is therefore necessary to adopt a multivariate approach that takes into account both the marginal structures of risks and their dependence structure.\\
   
   Multivariate risk theory is based on dependence modeling and includes multivariate ruin probabilities and multivariate risk measures. In the univariate case, ruin probability has been widely studied since the beginning of the 20th century (Lundberg (1903) \cite{lundberg1903}, Cramer (1930) \cite{cramer1930l}). In multivariate contexts, several definitions are proposed for ruin probability. Hult and Lindskog (2006) \cite{Mbt} defined a multivariate ruin probability based on a notion of ruin sets. Cai and Li (2007) \cite{Cai} defined different finite-time ruin probabilities. Risk measure theory was also enriched by some multivariate risk measures definitions. Jouini et al. (2004) \cite{jouini2004} defined vector-valued coherent risk measures. Cousin and Di Bernardino (2013) \cite{Elena2013} introduced some multi-dimensional extensions of usual univariate risk measures such as the Value-at-Risk.
   These measure are multi-dimensional valued, and thus cannot take benefit from a full order. Dhaene et al.~(2012) introduced of real valued family for risk measures for mono-periodic multivariate processes (\cite{AR3}). In multi-periodic context, Cénac et al. (2012) \cite{AR2} defined new multivariate risk indicators as sum of expected local ruin amounts using penalty functions.\\
   
   Capital allocation may be seen as a direct application of multivariate risk theory. Allocating a capital in actuarial contexts means distributing a positive quantity representing an allocation capital in a simplex. For insurance industry the calculation of the regulatory economic capital, which is called the Solvency Capital Requirement, is well controlled and its methodology is almost imposed by the supervisory authorities of the sector. Nevertheless, the allocation of this capital may be considered as an internal exercise for each company, and constitutes a management choice whose success is a key factor for firm performance optimization. It can also be seen as an indicator of its good governance.\\
   
  Several allocation methods were proposed in the actuarial literature. Some are driven from ideas of risk allocation using Euler or Shapley principles (see as examples Tasche (2007) \cite{Euler2} for the Euler allocation principle and Denault(2001) \cite{Denault2001} for the Shapley allocation principle). Others are based on the minimization of some ruin probabilities or multivariate risk indicators.  In this context, Cénac et al. (2012) \cite{AR2} proposed a capital allocation by minimizing some multivariate risk indicators. They presented a numerical optimization algorithm to find the optimal allocation in some general cases. In \cite{AR1} properties and asymptotic behavior of the allocation are studied in some bivariate cases. The impact of dependence on capital allocation was studied for certain cases of Euler's method by Barg{\`e}s et al. (2009) \cite{MarceauTvar}  and Cossette et al. (2012) \cite{MarceauTvar2}.\\
  
  In this paper, we give a further study on the allocation method by minimizing some multivariate risk indicators. The idea of minimizing multivariate risk indicators seems from our point of view more suited for the ORSA\footnote{Own Risk and Solvency
    Assessment} approach of Solvency 2 European norms. In a recent paper \cite{P12015}, we show that this method satisfies a set of coherence axioms. In the present paper, we study the impact of marginal distributions and dependence structure on the optimal allocation. We generalize the results presented in bivariate cases by Cénac et al. (2014) \cite{AR1}. Moreover, we study the impact of the dependence structure on the allocation using some parametric copulas. Our main results are explicit formulas for various classes of multivariate processes, as well as the limit behavior of the allocation as the capital goes to infinity.\\
 
    The paper is organized as follows. In the first section, we recall the allocation method by minimizing multivariate risk indicators. The second section is a generalization in higher dimension of the results presented for the bivariate case in Cénac et al.(2014) \cite{AR1}. We present some explicit formulas obtained for some particular models, and we discuss the asymptotic behavior of the optimal allocation for these models. The impact of the dependence structure on the allocation is studied in Section 3 through the analysis of comonotonic cases and some examples of copulas (FGM, Marshall-Olkins...).
     
   \section{Optimal allocation}
   In this section, we recall one capital allocation principle that consists of minimizing some risk indicators. This method was introduced in Cénac et al. (2012) \cite{AR2}. It is based on the minimization of some multivariate risk indicators.\\
   
     In a multivariate risk framework, we consider an insurance group composed of $d$ branches or business lines. We denote by $u$ the initial capital of the group. Let $X^p$  be a vectorial risk process $X_p=(X^1_p,\ldots,X^d_p)$, where $X^k_p$ corresponds to the losses  of the $k^{th}$ business line during the $p^{th}$ period.  We denote by  $R^k_p$ the reserve of the $k^{th}$ line at time $p$, so: $R^k_p=u_k-\displaystyle\sum_{\ell=1}^{p}X^k_\ell$, where $u_k\in\mathbb{R}^+$ is the initial capital of the $k^{th}$ business line, then $u=u_1+\cdots+u_d$.\\
     
   Cénac et al. (2012) \cite{AR2} defined the two following multivariate risk indicators, given penalty functions $g_k$:
   \begin{itemize}
   \item The indicator $I$: \[ \mathit{I}(u_1,\ldots,u_d)=\sum_{k=1}^{d}{\mathbb{E}\left(\sum_{p=1}^{n}{g_k(R_p^k)1\!\!1_{\{R_p^k<0\}}1\!\!1_{\{\sum_{j=1}^{d}R_p^j>0\}}}\right)} \/,\]
   \item The indicator $J$: 
    \[ \mathit{J}(u_1,\ldots,u_d)=\sum_{k=1}^{d}{\mathbb{E}\left(\sum_{p=1}^{n}{g_k(R_p^k)1\!\!1_{\{R_p^k<0\}}1\!\!1_{\{\sum_{j=1}^{d}R_p^j<0\}}}\right)} \/,\]
   \end{itemize}
   $g_k: \mathbb{R}^-\rightarrow \mathbb{R}^+$ are $C^1$, convex functions with $g_k(0)=0\/, ~g_k(x)\geq 0$ for $x < 0,~k=1,\ldots,d$. They represent the cost that each branch has to pay when it becomes insolvent while the group is solvent for the $I$ indicator, or while the group is also insolvent in the case of the $J$ indicator.\\
   These multivariate risk indicators are a generalization of the mono-periodic indicator family introduced in Dhaene et al.(2012)\cite{AR3}.\\
   
  The indicator $I$ represents the expected sum of penalty amounts of local ruins, knowing that the group remains solvent. In the case of the indicator $J$, the local ruin severities are taken into account only in the case of group insolvency.\\
   
  The idea of the optimal allocation is to allocate some capital $u$ by minimizing these indicators. This is finding an allocation vector $(u_1,\ldots,u_d)$ that minimizes the indicator such as $u=u_1+\cdots+u_d$, where $u$ is the initial capital that need to be shared among all branches.  In our paper \cite{P12015}, we explained why this capital allocation method can be considered as economically coherent.\\  
   
   By using optimization stochastic algorithms, one can estimate the minimum of these risk indicators. Cénac et al. (2012) \cite{AR2} propose a Kiefer-Wolfowitz version of the mirror algorithm as a convergent algorithm under general assumptions to find optimal allocation minimizing the indictor $I$. This algorithm is effective to solve the optimal allocation problem, especially for a large number of business lines, and for allocation over several periods.\\
   
   Since the solvency capital requirement (SCR) is calculated for one year time horizon only, it seems more practical to focus on the case of mono-periodic allocation.\\
   The risk $X_k$ corresponds to the losses of the $k^{th}$ branch during one period. It is a positive random variable in our context. We denote by $u$ the initial capital of the firm, it represents the allocation capital and it can be, as an example, the SCR or another investment capital. The quantity $u_k$ represents the portion of capital allocated to the $k^{th}$ branch, then $\sum_{i=1}^{d}u_i=u$. The simplex $\mathcal{U}^d_u=\{v=(v_1, \ldots, v_d)\in[0,u]^d, \sum_{i=1}^{d}v_i=u\}$ is the set of all possible allocations. For all $i\in \{1,\ldots,d\}$ we denote $\alpha_i=\frac{u_i}{u}$, then, $\sum_{i=1}^{d}\alpha_i=1$. For $(u_1,\ldots,u_d)\in\mathcal{U}^d_u$, we define also the reserve of the $k^{th}$ business line at the end of the period as $R^k=u_k-X_k$.  The aggregate sum of risks is $ S=\sum_{i=1}^{d}{X_i} $, and let  $ S^{-i}=\sum_{j=1;j\neq i}^{d}{X_j} $ for all $i\in \{1,\ldots,d\}$. Finally, $F_Z$ is the distribution function of a random variable $Z$, $\bar{F}_Z$ is its survival function and $f_Z$ its density function.\\
   
   In order to ensure the existence and uniqueness of the indicators minimum in $\mathcal{U}^d_u$, we suppose that for at least one $k\in\{1,\ldots,d\}$, $g_k$ is strictly convex. We also assume that the joint density $f_{(X_k,S)}$ support contains $[0,u]^2$ (see \cite{AR2}). In this case, the indicators $I$ and $J$ are strictly convex and admit a unique minimum.\\  

   Now we focus on the optimality condition for the indicators $I$ and $J$.\\
   For an initial capital $u$, and an optimal allocation minimizing the multivariate risk indicator $I$, we look for $u^*\in \mathbb{R}_+^d$ such that:
   \[\mathit{I}(u^*)=\underset{v_1+\cdots+v_d=u}{\inf}\mathit{I}(v), ~~v\in \mathbb{R}_+^d  \/.\]
    We assume that the functions $g_k$ are differentiable and such that for all $k\in\{1,\ldots,d\}$, the derivative $g^\prime_k(u_k-X_k)$ admits a moment of order one, and that the random vector $(X_k,S)$ has a joint density distribution denoted by $f_{(X_k,S)}$. Under these assumptions, the risk indicators $I$ and $J$ are differentiable, and in this case, we can calculate the following gradients: 
         \begin{align*}
         (\nabla I(v))_i&=\sum_{k=1}^{d}\int_{v_k}^{+\infty}g_k(v_k-x)f_{X_k,S}(x,u)dx
         + \mathbb{E}[g^\prime_i(v_i-X_i)1\!\!1_{\{X_i>v_i\}}1\!\!1_{\{S\leq u\}}]\\
         \mbox{and,}~~ (\nabla J(v))_i&=\sum_{k=1}^{d}\int_{v_k}^{+\infty}g_k(v_k-x)f_{X_k,S}(x,u)dx
               + \mathbb{E}[g^\prime_i(v_i-X_i)1\!\!1_{\{X_i>v_i\}}1\!\!1_{\{S\geq u\}}]\/.
         \end{align*} 
   Using the Lagrange multipliers method, we obtain an optimality condition verified by the unique solution to this optimization problem:
   \begin{equation}
   \mathbb{E}[g^\prime_i(u_i-X_i)1\!\!1_{\{X_i>u_i\}}1\!\!1_{\{S\leq u\}}]=\mathbb{E}[g^\prime_i(u_j-X_j)1\!\!1_{\{X_j>u_j\}}1\!\!1_{\{S\leq u\}}],~~\forall j\in\{1,\ldots,d\}^2
   \label{Optcg}\/.
   \end{equation}
   A natural choice for penalty functions is the ruin severity: $g_k(x)=|x|$. In that case, and if the joint density $f_{(X_k,S)}$ support contains $[0,u]^2$, for at least one $k\in\{1,\ldots,d\}$, our optimization problem  has a unique solution.\\
   
   In Maume-Deschamps et al. (2015) \cite{P12015}, we showed that in the case of penalty functions $g_k(x)=|x|~ \forall k\in \{1,\ldots,d\}$, and for continuous random vector $(X_1,\ldots,X_d)$ such that the joint density $f_{(X_k,S)}$ support contains $[0,u]^2$, for at least one $k\in\{1,\ldots,d\}$, the optimal allocation by minimization of the indicators $I$ and $J$ is a symmetric riskless full allocation. It satisfies also the properties of comonotonic additivity, positive homogeneity, translation invariance, monotonicity, and continuity.\\
    
  We may write the indicators as follows:
 \begin{align*}
 \mathit{I}(u_1,\ldots,u_d)&=\sum_{k=1}^{d}{\mathbb{E}\left({\arrowvert R^k \arrowvert1\!\!1_{\{R^k<0\}}1\!\!1_{\{\sum_{i=1}^{d}R^i\geq0\}}}\right)}\\
 &=\sum_{k=1}^{d}{\mathbb{E}\left({(X_k-u_k)1\!\!1_{\{X_k>u_k\}}1\!\!1_{\{\sum_{i=1}^{d}X_i\leq u\}}}\right)}=\sum_{k=1}^{d}{\mathbb{E}\left({(X_k-u_k)^+1\!\!1_{\{S\leq u\}}}\right)}\/,
 \end{align*}
 and,
 \begin{align*}
 \mathit{J}(u_1,\ldots,u_d)&=\sum_{k=1}^{d}{\mathbb{E}\left({\arrowvert R^k \arrowvert1\!\!1_{\{R^k<0\}}1\!\!1_{\{\sum_{i=1}^{d}R^i\leq0\}}}\right)}\\
 &=\sum_{k=1}^{d}{\mathbb{E}\left({(X_k-u_k)1\!\!1_{\{X_k>u_k\}}1\!\!1_{\{\sum_{i=1}^{d}X_i\geq u\}}}\right)}=\sum_{k=1}^{d}{\mathbb{E}\left({(X_k-u_k)^+1\!\!1_{\{S\geq u\}}}\right)}\/.
 \end{align*} 
In the special case where $g_k(x)=|x|$, \ref{Optcg} becomes:
\begin{equation}\label{ZoneO}
\mathbb{P}\left( X_i>u_{i}, S\leq u\right) = \mathbb{P}\left( X_j>u_{j}, S\leq u\right) , \forall (i,j)\in\{{{1,2,\ldots,d}}\}^{2}\/.
\end{equation}
For the $J$ indicator, this condition can be written:
\begin{equation}\label{ZoneV}
\mathbb{P}\left( X_i>u_{i}, S\geq u\right) = \mathbb{P}\left( X_j>u_{j}, S\geq u\right) , \forall (i,j)\in\{{{1,2,\ldots,d}}\}^{2}\/.
\end{equation}
Some explicit and semi-explicit formulas for the optimal allocation can be obtained with this optimality condition. Our problem reduces to the study of this allocation depending on the nature of the distributions of the risk $X_k$ and on the form of dependence between them.
   \section{Some general results in the independence case}
   In this section we generalize the results presented in dimension 2 by Cénac et al. (2014) in the first section of their paper \cite{AR1} to higher dimension.\\
   The results presented here give explicit forms to the optimal allocation for some specific distributions. This could be used as a benchmark to test optimization algorithms convergence.\\ 
   
    We also get some asymptotic results, when the capital $u$ goes to infinity. We study both the exponential and the sub-exponential cases, and we determine the difference between their asymptotic behavior for exponential and Pareto distributions cases.\\
   
   We consider from now on that the penalty functions are identical and equal to the severity of local ruin $g_k(x)=g(x)=|x|, \forall k\in\{1,2,\ldots,d\}$. The optimality conditions for minimizing the multivariate risk indicators $I$ and $J$ are given respectively by Equations (\ref{ZoneO}) and (\ref{ZoneV}). In this section we focus on the independence case.\\ 
   
   Recall that $\alpha_i=\frac{u_i}{u}\in[0,1]$, so that when $u\rightarrow +\infty$, we may consider convergent subsequences in the proofs below. By abuse of notation, we consider $\underset{u\rightarrow+\infty}{\lim}\alpha_i$. In fact, we consider a convergent subsequence and get the existence of the limit by obtaining the uniqueness of the limit point.
   \subsection{Independent exponentials}~~\\
 ~~Assume $X_1,X_2,\ldots,X_d$ are independent exponential random variables with respective parameters $0<\beta_1<\beta_2<\cdots<\beta_d$. Remark that in the
 particular case where $\beta_1=\beta_2=\cdots=\beta_d$, the optimal allocation is $u_1=\cdots= u_d= u/d$.
 \begin{Proposition}[The optimal allocation for the indicator $I$]
 \label{prp21}
 The allocation minimizing the risk indicator $I$ is the unique solution in $\mathcal{U}^d_u$, of the following equations system:
  \begin{equation}
  h(\beta_i\alpha_i)-h(\beta_j\alpha_j)
   -\sum_{\ell=1}^{d}{A_\ell h(\beta_\ell)[h(\alpha_i\cdot(\beta_i-\beta_\ell) )-h(\alpha_j\cdot(\beta_j-\beta_\ell) )]}=0,\forall (i,j)\in\{{{1,2,\ldots,d}}\}^{2}
   \label{EIZO}\/,
 \end{equation} 
 where h is the function defined by $ h(x)= \exp(-u\cdot x)$, and $A_\ell$ denotes the constants \text{$ A_\ell=\displaystyle\prod_{j=1,j\neq \ell}^{d}{\dfrac{\beta_j}{\beta_j-\beta_\ell}}$}, for 
  $\ell=1,\ldots,d$.      
 \end{Proposition}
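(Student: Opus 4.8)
The plan is to read off the optimality condition for $I$ in its probabilistic form (\ref{ZoneO}), namely $\mathbb{P}(X_i>u_i,\,S\le u)=\mathbb{P}(X_j>u_j,\,S\le u)$, and to make it explicit by computing the single quantity $\mathbb{P}(X_i>u_i,\,S\le u)$ for independent exponentials. I would condition on $X_i$: writing $S=X_i+S^{-i}$ with $S^{-i}=\sum_{j\ne i}X_j$ independent of $X_i$, and observing that the two events $\{X_i>u_i\}$ and $\{S\le u\}$ force $u_i<X_i\le u$ (because $S^{-i}\ge 0$), one gets
\[
\mathbb{P}(X_i>u_i,\,S\le u)=\int_{u_i}^{u}F_{S^{-i}}(u-x)\,\beta_i e^{-\beta_i x}\,dx .
\]

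Next I would insert the law of $S^{-i}$. Since $S^{-i}$ is a sum of $d-1$ independent exponentials with the \emph{distinct} rates $\{\beta_j:j\ne i\}$, it is hypoexponential and a partial-fraction expansion gives $\bar F_{S^{-i}}(s)=\sum_{\ell\ne i}A_\ell^{(i)}e^{-\beta_\ell s}$ with $A_\ell^{(i)}=\prod_{j\ne i,\,j\ne\ell}\frac{\beta_j}{\beta_j-\beta_\ell}$. Substituting $F_{S^{-i}}=1-\bar F_{S^{-i}}$ and integrating term by term reduces the whole computation to elementary exponential integrals, each producing a factor $\frac{\beta_i}{\beta_\ell-\beta_i}$ from $\int_{u_i}^{u}\beta_i e^{(\beta_\ell-\beta_i)x}\,dx$.

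The only genuine content is then an algebraic collapse of the coefficients. Multiplying $A_\ell^{(i)}$ by the integral factor telescopes to a single product: one checks $A_\ell^{(i)}\cdot\frac{\beta_i}{\beta_\ell-\beta_i}=-\prod_{j\ne\ell}\frac{\beta_j}{\beta_j-\beta_\ell}=-A_\ell$, where $A_\ell$ is precisely the coefficient appearing in the survival function $\bar F_{S}(s)=\sum_{\ell=1}^{d}A_\ell e^{-\beta_\ell s}$ of the full sum. Using in addition the normalization $\sum_{\ell=1}^{d}A_\ell=\bar F_S(0)=1$ to absorb the leftover boundary term $e^{-\beta_i u}$, and recalling $u_i=\alpha_i u$ together with $h(x)=e^{-ux}$, I expect to arrive at the closed form
\[
\mathbb{P}(X_i>u_i,\,S\le u)=h(\beta_i\alpha_i)-\sum_{\ell=1}^{d}A_\ell\,h(\beta_\ell)\,h\!\left(\alpha_i(\beta_i-\beta_\ell)\right),
\]
where the boundary term is exactly the $\ell=i$ summand $A_i h(\beta_i)$, since $h(0)=1$. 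Equating this expression for the indices $i$ and $j$, as demanded by (\ref{ZoneO}), and cancelling the common terms, yields precisely the system (\ref{EIZO}).

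Finally, existence and uniqueness need not be re-proved from scratch: the joint density $f_{(X_k,S)}$ of independent exponentials has support $[0,+\infty)^2\supset[0,u]^2$, so by the results recalled in Section~1 the indicator $I$ is strictly convex on $\mathcal{U}^d_u$ and admits a unique minimizer, for which the first-order condition (\ref{ZoneO}) — equivalently (\ref{EIZO}) together with $\sum_i\alpha_i=1$ — is necessary and sufficient. The main obstacle is therefore not analytic but purely the bookkeeping of the partial-fraction coefficients: carefully verifying the sign in the identity $A_\ell^{(i)}\cdot\frac{\beta_i}{\beta_\ell-\beta_i}=-A_\ell$ and confirming that the isolated $e^{-\beta_i u}$ boundary contribution is exactly what is needed to fold everything into one clean sum over $\ell=1,\dots,d$.
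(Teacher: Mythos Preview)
Your proposal is correct and follows essentially the same route as the paper: both compute $\mathbb{P}(X_i>u_i,\,S\le u)$ by conditioning on $X_i$, plug in the hypoexponential (generalized Erlang) survival function of $S^{-i}$, and use the identities $A_\ell^{(i)}\cdot\frac{\beta_i}{\beta_\ell-\beta_i}=-A_\ell$ together with $\sum_{\ell=1}^d A_\ell=1$ to collapse the sum over $\ell\ne i$ into a single sum over $\ell=1,\dots,d$. Your write-up simply makes the algebraic bookkeeping more explicit than the paper does, and your appeal to the strict convexity from Section~1 for existence and uniqueness is exactly what the paper relies on as well.
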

 \begin{proof}
 If $X_i\sim\mathcal{E}(\beta_i)$ are independent exponential random variables, then $S^{-i} =\displaystyle\sum_{j=1;j\neq i}^{d}{X_j}$ have a generalized Erlang distribution with parameters $(\beta_1,\beta_2,\ldots,\beta_{i-1},\beta_{i+1},\ldots,\beta_d)$, so we write:
 \begin{align*}
  \mathbb{P}\left( X_i>u_{i},\sum_{j=1}^{d}{X_j}\leq u\right) &= \mathbb{P}\left( X_i>u_{i}\right) -\mathbb{P}\left( X_i>u_{i},\sum_{j=1}^{d}{X_j}>u\right)\\ 
  & = \bar{F}_{X_i}(u_{i})-\bar{F}_{X_i}(u)-\int_{u_{i}}^{u}\bar{F}_{S^{-i}}(u-s)f_{X_i}(s) \mathrm{ds}\\
  &=h(\beta_i\alpha_i)-h(\beta_i)-\sum_{\ell=1}^{d}{A_\ell h(\beta_\ell)h(\alpha_i\cdot(\beta_i-\beta_\ell) )}+\sum_{\ell=1}^{d}{A_\ell h(\beta_i)} \\
  &=h(\beta_i\alpha_i)-\sum_{l=1}^{d}{A_\ell h(\beta_\ell)h(\alpha_i\cdot(\beta_i-\beta_\ell) )} \/,
  \end{align*} 
  because, $\bar{F}_{X_i}(x)=e^{-\beta_ix}$,  $\bar{F}_{S^{-i}}(x)=\displaystyle\sum_{\ell=1,\ell\neq i}^{d}{(\displaystyle\prod_{j=1,j\neq \ell,j\neq i}^{d}{\dfrac{\beta_j}{\beta_j-\beta_\ell}})e^{-\beta_\ell x}}$ and $\displaystyle\sum_{l=1}^{d}{A_\ell}=1$.\\ 
  The survival function of the generalized Erlang distribution with parameters $(\beta_1,\beta_2,\ldots,\beta_d)$ is given by:
  \[ \bar{F}_{X}(x)=\sum_{\ell=1}^{d}{(\displaystyle\prod_{j=1,j\neq \ell}^{d}{\dfrac{\beta_j}{\beta_j-\beta_\ell}})e^{-\beta_\ell x}}=\sum_{\ell=1}^{d}{A_\ell e^{-\beta_\ell x}}\/. \]
The optimal allocation is the unique solution in $\mathcal{U}^d_u$, of the following equations system:
 \[
  \mathbb{P}\left( X_i>u_{i},\displaystyle\sum_{k=1}^{d}{X_k}\leq u\right) = \mathbb{P}\left( X_j>u_{j},\displaystyle\sum_{j=k}^{d}{X_k}\leq u\right) , \forall (i,j)\in\{{{1,2,\ldots,d}}\}^{2} 
 \/,\]
which leads to (\ref{EIZO}).
 \end{proof}
 The resulting system is a system of nonlinear equations, which can be solved numerically.
 \begin{Proposition}[The asymptotic optimal allocation for the indicator $I$]
 \label{pro3.2} When the capital $u$ goes to infinity, the asymptotic optimal allocation satisfies:\[\underset{u\to\infty}\lim{\left(\frac{u_1}{u},\frac{u_2}{u},\ldots,\frac{u_d}{u}\right)}=
     \left(\dfrac{\dfrac{1}{\beta_i}}{\displaystyle\sum_{j=1}^{d}{\dfrac{1}{\beta_j}}}\right)_{i=1,2,\ldots,d}
     \/.\]
 \end{Proposition}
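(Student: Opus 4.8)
The plan is to start from the equality characterisation of the optimiser obtained in Proposition~\ref{prp21}, namely $\mathbb{P}(X_i>u_i,S\le u)=\mathbb{P}(X_j>u_j,S\le u)$ for all $(i,j)$, together with the explicit expression computed there. Writing $G_i(u):=\mathbb{P}(X_i>u_i,S\le u)$ and using that $h(\beta_\ell)h(\alpha_i(\beta_i-\beta_\ell))=h(\beta_i\alpha_i)\,e^{-u\beta_\ell(1-\alpha_i)}$, that expression factors as
\[
G_i(u)=h(\beta_i\alpha_i)\Big(1-\sum_{\ell=1}^{d} A_\ell\, e^{-u\beta_\ell(1-\alpha_i)}\Big).
\]
The guiding idea is that, for $\alpha_i$ away from $1$, the exponential decay rate of $G_i$ is governed entirely by the prefactor $h(\beta_i\alpha_i)=e^{-u\beta_i\alpha_i}$; equating the decay rates of $G_i$ and $G_j$ will then force $\beta_i\alpha_i$ and $\beta_j\alpha_j$ to share a common limit, from which the claim follows by the normalisation $\sum_i\alpha_i=1$.

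Concretely, I would extract (as announced before the statement) a subsequence $u\to\infty$ along which $\alpha_i\to L_i\in[0,1]$ with $\sum_i L_i=1$, and analyse $-\tfrac1u\log G_i(u)$. When $L_i<1$, the quantity $1-\alpha_i$ is eventually bounded below by a positive constant; since $0<\beta_1<\cdots<\beta_d$, every term $A_\ell e^{-u\beta_\ell(1-\alpha_i)}$ tends to $0$, the parenthesis tends to $1$, and hence $-\tfrac1u\log G_i(u)\to\beta_i L_i$. Feeding this into the identity $G_i=G_j$ yields $\beta_i L_i=\beta_j L_j$ for all $i,j$; calling the common value $c$ gives $L_i=c/\beta_i$, and $\sum_i L_i=1$ fixes $c=\big(\sum_j 1/\beta_j\big)^{-1}$, which is exactly the asserted limit. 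Because every convergent subsequence produces the same limit point, the whole sequence converges, giving uniqueness of the limit.

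The step requiring genuine care — and the one I expect to be the main obstacle — is ruling out the degenerate limit in which some $L_{i_0}=1$ (so $L_j=0$ for $j\neq i_0$), since there the factorisation degenerates: the parenthesis no longer tends to $1$ and the rate argument above breaks down. I would dispose of this by a comparison. On one hand, $G_{i_0}(u)\le \mathbb{P}(X_{i_0}>u_{i_0})=e^{-u\beta_{i_0}\alpha_{i_0}}$, so $\liminf_{u}-\tfrac1u\log G_{i_0}(u)\ge\beta_{i_0}\lim\alpha_{i_0}=\beta_{i_0}>0$. On the other hand, for any $j\neq i_0$ we have $L_j=0<1$, so the previous paragraph applies and gives $-\tfrac1u\log G_j(u)\to\beta_j L_j=0$. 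These two facts contradict the identity $G_{i_0}(u)=G_j(u)$, so no $L_i$ can equal $1$ and every $L_i$ lies in $(0,1)$. The remaining points are routine: positivity of $G_i(u)$ (so the logarithm is well defined) is guaranteed by the support assumption on $f_{(X_k,S)}$, and the interchange $\tfrac1u\log(1+o(1))\to0$ is elementary.
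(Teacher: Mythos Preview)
Your proof is correct and follows essentially the same route as the paper: both rely on the factorisation $G_i(u)=h(\beta_i\alpha_i)\bigl(1-\sum_\ell A_\ell\,e^{-u\beta_\ell(1-\alpha_i)}\bigr)$ (your display, the paper's equation~(\ref{eq2})) and the observation that the bracket tends to $1$ once every $\alpha_i$ is bounded away from $1$, which forces $\beta_i\alpha_i-\beta_j\alpha_j\to0$. Your phrasing via $-\tfrac1u\log G_i(u)$ is equivalent to the paper's division $G_i/G_j=h(\beta_i\alpha_i-\beta_j\alpha_j)\cdot(\text{ratio of brackets})\to1$, and your treatment of the degenerate case $L_{i_0}=1$ is in fact more explicit than the paper's one-line assertion that ``Equations system~(\ref{ZoneO}) cannot be satisfied in this case''.
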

 \begin{proof}
 Equations system (\ref{EIZO}) is equivalent to:
 \begin{equation}
 \forall (i,j)\in\{{{1,2,\ldots,d}}\}^{2},~~h(\beta_i\alpha_i)[1-\displaystyle\sum_{\ell=1}^{d}{A_\ell h((1-\alpha_i)\beta_\ell)}] =
 h(\beta_j\alpha_j)[1-\displaystyle\sum_{\ell=1}^{d}{A_\ell h((1-\alpha_j)\beta_\ell)}]\/.
 \label{eq2}
 \end{equation}
 Firstly, remark that for all $i \in\{{1,2,\ldots,d}\}$, $\underset{u\to\infty}\lim{\sup\frac{u_i}{u}}<1 $ because if this result was not satisfied then there would exist $i\in\{1,\ldots,d\}$ such that  $\underset{u\to\infty}\lim{\frac{u_i}{u}}=\underset{u\to\infty}\lim{\alpha_i}=1 $, taking if necessary a convergent subsequence of $\alpha_i$.  For all $ j\neq i$,  $\underset{u\to\infty}\lim{\frac{u_j}{u}}=\underset{u\to\infty}\lim{\alpha_j}=0 $, and Equations system (\ref{ZoneO}) cannot be satisfied in this case.\\
 Equations system (\ref{eq2}) is equivalent to:
 \[\forall (i,j)\in\{{{1,2,\ldots,d}}\}^{2},~~h(\beta_i\alpha_i-\beta_j\alpha_j)=\frac{1-\sum_{\ell=1}^{d}{A_\ell h((1-\alpha_j)\beta_\ell)}}{1-\sum_{\ell=1}^{d}{A_\ell h((1-\alpha_i)\beta_\ell)}}\/,
 \]
 the right side of the last equations system tends to 1 when $u$ tends to $\infty$, therefore, we deduce that
 $\underset{u\to\infty}\lim{h(\beta_i\alpha_i-\beta_j\alpha_j)}=1$ and consequently:
 \[ \forall (i,j)\in\{{{1,2,\ldots,d}}\}^{2},\underset{u\to\infty}\lim{\alpha_i}=\frac{\beta_j}{\beta_i}\underset{u\to\infty}\lim{\alpha_j}\/, \]
 then, for all $i \in\{{1,2,\ldots,d}\}$:
  \[ \underset{u\to\infty}\lim{\alpha_i}=\dfrac{\dfrac{1}{\beta_i}}{\sum_{j=1}^{d}{\dfrac{1}{\beta_j}}}\/. \]
 \end{proof}
 \begin{remark}
 Based on the above result, we can conclude that asymptotically:
  \begin{itemize}
   \item since $\beta_i<\beta_j$ we have $\alpha_i>\alpha_j$, this means that we allocate more capital to the most risky business line.
   \item $\alpha_i$ is a decreasing function of $\beta_i$. This observation is consistent with the previous conclusion.
   \item $\alpha_j$ is an increasing function of $\beta_i$ for $j\neq i$.
  \end{itemize} 
 \end{remark}
 \begin{Proposition}[The optimal allocation for the indicator $J$] The allocation minimizing the risk indicator $J$ is the unique solution in $\mathcal{U}^d_u$, of the following equations system: 
 \begin{equation}
  \forall (i,j)\in\{{{1,2,\ldots,d}}\}^{2},~~~~
     \sum_{\ell=1}^{d}{A_\ell h(\beta_\ell)[h(\alpha_i\cdot(\beta_i-\beta_\ell) )-h(\alpha_j\cdot(\beta_j-\beta_\ell) )]}=0
     \/.\label{EIZV}
  \end{equation}
  \end{Proposition}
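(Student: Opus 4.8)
The plan is to obtain the system (\ref{EIZV}) from the optimality condition (\ref{ZoneV}) for $J$ by a single complementation step, reusing the probability already evaluated in the proof of Proposition \ref{prp21}. Since $X_1,\ldots,X_d$ are independent exponentials, the aggregate $S$ has an absolutely continuous (generalized Erlang) law, so $\mathbb{P}(S=u)=0$ and the events $\{S\le u\}$ and $\{S\ge u\}$ may be treated as complementary up to a null set. Accordingly, for each index $i$ I would write
\[
\mathbb{P}(X_i>u_i,\,S\ge u)=\mathbb{P}(X_i>u_i)-\mathbb{P}(X_i>u_i,\,S\le u),
\]
which isolates the only quantity I still need to identify.

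Next I would substitute the two pieces on the right-hand side. The marginal term is $\mathbb{P}(X_i>u_i)=\bar{F}_{X_i}(u_i)=e^{-\beta_i u_i}=h(\beta_i\alpha_i)$, and the joint term $\mathbb{P}(X_i>u_i,\,S\le u)$ was shown, inside the proof of Proposition \ref{prp21}, to equal $h(\beta_i\alpha_i)-\sum_{\ell=1}^{d}A_\ell h(\beta_\ell)h(\alpha_i(\beta_i-\beta_\ell))$. Subtracting, the two copies of $h(\beta_i\alpha_i)$ cancel and I am left with
\[
\mathbb{P}(X_i>u_i,\,S\ge u)=\sum_{\ell=1}^{d}A_\ell h(\beta_\ell)\,h\bigl(\alpha_i(\beta_i-\beta_\ell)\bigr).
\]
Inserting this expression into (\ref{ZoneV}), equating the values for indices $i$ and $j$ and moving every term to one side produces exactly (\ref{EIZV}).

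For the uniqueness assertion I would appeal to the convexity facts recalled in Section 1: with $g_k(x)=|x|$ and the support hypothesis on $f_{(X_k,S)}$, the indicator $J$ is strictly convex on the simplex $\mathcal{U}^d_u$ and thus has a single minimizer, characterized by (\ref{ZoneV}); hence (\ref{EIZV}) has a unique solution in $\mathcal{U}^d_u$. I do not expect a genuine obstacle here — the whole argument is the complementation step, whose only delicate point is the equivalence of $\{S\le u\}$ and $\{S\ge u\}$ with their strict versions, guaranteed by $\mathbb{P}(S=u)=0$. As a consistency check, adding the left-hand sides of (\ref{EIZO}) and (\ref{EIZV}) collapses the Erlang sums and leaves $h(\beta_i\alpha_i)-h(\beta_j\alpha_j)$, reflecting the identity $\mathbb{P}(X_i>u_i,S\le u)+\mathbb{P}(X_i>u_i,S\ge u)=\mathbb{P}(X_i>u_i)$ that underlies the relation between the $I$- and $J$-allocations.
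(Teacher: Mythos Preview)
Your proposal is correct and follows essentially the same route as the paper, which simply states that the proof is ``similar to that of Proposition \ref{prp21}''. Indeed, the complementation $\mathbb{P}(X_i>u_i,\,S\ge u)=\mathbb{P}(X_i>u_i)-\mathbb{P}(X_i>u_i,\,S\le u)$ is precisely the first line of that earlier proof read in reverse, so your argument amounts to extracting the intermediate quantity already computed there; your write-up is in fact more explicit than the paper's one-line reference.
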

  \begin{proof}
  The proof is similar to that of Proposition \ref{prp21}.
  \end{proof}
  \begin{Proposition}[The asymptotic optimal allocation for the indicator $J$] When the capital $u$ goes to infinity, the optimal allocation minimizing the risk indicator $J$ is the following:
   \[\underset{u\to\infty}\lim{\frac{u_1}{u}}=1
         ~and~
        \underset{u\to\infty}\lim{\frac{u_{j}}{u}}=0
              ~\forall j \in \{2,3,\ldots,d\}\/.\] 
    \end{Proposition}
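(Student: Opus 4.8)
The plan is to work throughout with the quantity $G_i := \mathbb{P}(X_i>u_i,\,S\geq u)$. By the computation carried out in the proof of Proposition~\ref{prp21} (using $\mathbb{P}(X_i>u_i,S\geq u)=\mathbb{P}(X_i>u_i)-\mathbb{P}(X_i>u_i,S\leq u)$ and $\mathbb{P}(X_i>u_i)=h(\beta_i\alpha_i)$), one gets
\[
G_i=\sum_{\ell=1}^{d}A_\ell\,h(\beta_\ell)\,h(\alpha_i(\beta_i-\beta_\ell))=\sum_{\ell=1}^{d}A_\ell\exp\bigl(-u[\beta_\ell(1-\alpha_i)+\alpha_i\beta_i]\bigr),
\]
which is exactly the left-hand side of (\ref{EIZV}). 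The optimality condition (\ref{ZoneV}) therefore says precisely that $G_1=G_2=\cdots=G_d$ at the optimum. Each summand decays at exponential rate $c_{i\ell}:=\beta_\ell(1-\alpha_i)+\alpha_i\beta_i$, and since $1-\alpha_i\geq 0$ and $\beta_1<\cdots<\beta_d$, the smallest rate is $c_{i1}=\beta_1+\alpha_i(\beta_i-\beta_1)$, attained at $\ell=1$.

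The idea is then to squeeze the common value $G_1=G_i$ between two explicit exponentials. For the lower bound I would exploit the heaviest tail: since $u_1\leq u$, the event $\{X_1>u\}$ forces both $X_1>u_1$ and $S\geq X_1>u$, so $\{X_1>u\}\subseteq\{X_1>u_1,\,S\geq u\}$ and hence $G_1\geq\mathbb{P}(X_1>u)=e^{-\beta_1 u}$. For the upper bound on $G_i$, I would pass to absolute values inside the sum: because $G_i\geq 0$,
\[
G_i=\Bigl|\sum_{\ell=1}^{d}A_\ell e^{-uc_{i\ell}}\Bigr|\leq\Bigl(\sum_{\ell=1}^{d}|A_\ell|\Bigr)e^{-uc_{i1}}=C\,e^{-u(\beta_1+\alpha_i(\beta_i-\beta_1))},
\]
where $C:=\sum_{\ell=1}^{d}|A_\ell|\geq|\sum_{\ell}A_\ell|=1$ is a finite constant depending only on the $\beta$'s (recall $\sum_\ell A_\ell=1$).

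Combining the two bounds through $G_1=G_i$ gives $e^{-\beta_1 u}\leq C\,e^{-u(\beta_1+\alpha_i(\beta_i-\beta_1))}$, i.e. after simplification $u\,\alpha_i(\beta_i-\beta_1)\leq\log C$. For each $i\geq 2$ one has $\beta_i-\beta_1>0$, so $0\leq\alpha_i\leq\frac{\log C}{u(\beta_i-\beta_1)}\to 0$ as $u\to\infty$; hence $\lim_{u\to\infty}\alpha_i=0$ for all $i\geq 2$, and $\lim_{u\to\infty}\alpha_1=1-\sum_{i\geq 2}\lim\alpha_i=1$ because $\sum_i\alpha_i=1$. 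The delicate point is the upper estimate: the naive bound $G_i\leq\mathbb{P}(X_i>u_i)=e^{-u\beta_i\alpha_i}$ only yields $\alpha_i\leq\beta_1/\beta_i$ and is too weak to force $\alpha_i\to 0$. One must instead read off the sharp decay rate $\beta_1+\alpha_i(\beta_i-\beta_1)$ from the explicit generalized-Erlang expression while controlling the possibly negative coefficients $A_\ell$ through the absolute-value bound. A pleasant feature of this route is that the squeeze delivers the limit directly, so—unlike the argument for the indicator $I$—no convergent-subsequence device is needed.
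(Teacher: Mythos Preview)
Your proof is correct and takes a genuinely different route from the paper's.

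The paper argues by contradiction: it assumes $\varlimsup_{u\to\infty}\alpha_1<1$, divides the optimality system by $h(\beta_1)$, and then for $i=1$ decomposes the right-hand side into three groups of terms according to whether $\ell=1$, $2\leq\ell\leq j$, or $\ell>j$. After showing that the last two groups vanish in the limit, it concludes $h(\alpha_j(\beta_j-\beta_1))\to1$ for every $j\geq2$, i.e.\ $\alpha_j=o(1/u)$, which contradicts $\sum_\ell\alpha_\ell=1$. This requires passing to convergent subsequences and a case-by-case sign analysis of the exponents.

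Your argument replaces all of this with a direct squeeze. The probabilistic lower bound $G_1\geq\mathbb{P}(X_1>u)=e^{-\beta_1u}$ is the key new idea; the paper never uses it. Combined with the crude upper bound $G_i\leq C\,e^{-uc_{i1}}$ obtained by putting absolute values on the $A_\ell$, the equality $G_1=G_i$ immediately yields the quantitative estimate $u\alpha_i(\beta_i-\beta_1)\leq\log C$, hence $u_i\leq\log C/(\beta_i-\beta_1)$ for $i\geq2$. This is in fact stronger than the statement: it shows each $u_i$ stays \emph{bounded} as $u\to\infty$, not merely $\alpha_i\to0$. As you note, the argument is entirely direct and needs no subsequence extraction. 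The paper's approach, by contrast, is closer in spirit to the sub-exponential machinery developed later in the section, where explicit convolution formulas are unavailable and one must reason through the dominant-term structure case by case.
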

   \begin{proof}
    Equations system (\ref{EIZV}) is equivalent to the following one:     \[ \forall (i,j)\in\{{{1,2,\ldots,d}}\}^{2},~~~~
         \sum_{\ell=1}^{d}{A_\ell\cdot h((1-\alpha_i)\cdot(\beta_\ell-\beta_i) )}=\sum_{\ell=1}^{d}{A_\ell\cdot h(\alpha_j\cdot(\beta_j-\beta_\ell)+\beta_\ell-\beta_i )}\/.
      \]
      If $\varlimsup\frac{u_1}{u}<1$, as $u$ goes to $+\infty$ in the equations of the previous system for $i=1$ we get, taking if necessary a convergent subsequence,
      \begin{equation}
      ~\forall j \in \{2,3,\ldots,d\}, ~~
      \underset{u\to\infty}\lim{\sum_{\ell=1}^{d}{A_\ell\cdot h(\alpha_j\cdot(\beta_j-\beta_\ell)+\beta_\ell-\beta_1 )}}=A_1\/.
      \label{veronique}
      \end{equation}
     The first terms of these equations can be decomposed into three parts as follows:
     \begin{align*}
     \underset{u\to\infty}\lim{\sum_{\ell=1}^{d}{A_\ell\cdot h(\alpha_j\cdot(\beta_j-\beta_\ell)+\beta_\ell-\beta_1 )}}&=A_1\cdot\underset{u\to\infty}\lim{ h(\alpha_j\cdot(\beta_j-\beta_1))}\\&+\sum_{\ell=2}^{j}{A_\ell}\cdot
             \underset{u\to\infty}\lim{ h(\alpha_j\cdot (\beta_j-\beta_\ell)+\beta_\ell-\beta_1 )}\\&+\sum_{\ell=j+1}^{d}{A_\ell}\cdot\underset{u\to\infty}\lim{ h(\alpha_j\cdot(\beta_j-\beta_\ell)+\beta_\ell-\beta_1 )}\/.
     \end{align*}
       For all,  $j>1$, $\displaystyle\sum_{\ell=2}^{j}{A_\ell}\cdot\underset{u\to\infty}\lim{ h(\alpha_j\cdot(\beta_j-\beta_\ell)+\beta_\ell-\beta_1 )}=0$, because $\beta_\ell-\beta_1>0$ and \text{$\beta_j-\beta_\ell\geq 0$} for $ \ell\in \{2,3,\ldots,j\}$. Moreover, $\displaystyle\sum_{\ell=j+1}^{d}{A_\ell}\cdot\underset{u\to\infty}\lim{ h(\alpha_j\cdot(\beta_j-\beta_\ell)+\beta_\ell-\beta_1 )}=0$, because for all \text{$\ell\in \{j+1,j+2,\ldots,d\},$} $\alpha_j\cdot(\beta_j-\beta_\ell)+\beta_\ell-\beta_1 = (\beta_\ell-\beta_j)(1-\alpha_j)+\beta_j-\beta_1>0$. So that \ref{veronique} leads to $\forall j\in\{1,\ldots,d\}$ \text{$\underset{u\to\infty}\lim{h(\alpha_j\cdot(\beta_j-\beta_1))}=1$}.
       We deduce that $~\forall j \in \{2,3,\ldots,d\}; ~~\alpha_j\overset{u\to\infty}{=}o(\frac{1}{u})$. This contradicts the necessary condition: $\underset{u\to\infty}{\lim}{\sum_{\ell=1}^{d}{\alpha_j}}=1$.   
   \end{proof}
 \subsection{Some distributions of the sub-exponential family}~~\\
 In most cases of risk distributions, we cannot give explicit or semi-explicit optimal allocation formulas, the difficulty comes from the lack of a simple form of the risks sum $S$ and its joint distribution with each risk $X_i$. In this section, we present asymptotic results (as $u$ goes to infinity) for the optimal allocation in the case of some distributions of the sub-exponential family. In this way, we generalize results of \cite{AR1} to higher dimension.\\
 We recall the sub-exponential distributions family definition, consisting in distributions of positive support, with a distribution function that satisfies: 
 \[ \frac{\overline{F^{*2}}(x)}{\overline{F}(x)}\overset{x\to+\infty}{\longrightarrow}2\/,\]
 where $\overline{F^{*2}}$ is the convolution of $\overline{F}$.\\
 In Asmussen (2000) \cite{RuinP}, it is proven that the sub-exponential distributions satisfy also the following relation, for all $d\in\mathbb{N}^*$:
 \[ \frac{\overline{F^{*d}}(x)}{\overline{F}(x)}\overset{x\to+\infty}{\longrightarrow}d\/,\]
 where $\overline{F^{*d}}$ is the $d^{th}$convolution of $\overline{F}$.\\
We shall use the following theorem proved in Cénac and al. 
 \begin{theorem}[Sub-exponential distributions \cite{AR1}] \label{thZO}
 Let $X$ be a random variable with sub-exponential distribution $F_X$, $Y$ a random variable with support $\mathbb{R}^+$, independent of $X$, and $(u,v)\in (\mathbb{R}^+)^2$, such that:
 \begin{itemize}
 \item there exists $0<\kappa_1<\kappa_2<1$ such that for $u$ large enough, $\kappa_1\leq\frac{v}{u}\leq\kappa_2$, 
 \item 
 $ \frac{\bar{F}_X(y)}{\bar{F}_X(x)}\overset{x\to+\infty}{=}O(1)\/, \mbox{ if } y=\Theta(x)$\footnote{For $(x,y)\in\mathbb{R}^{2+}$, we shall denote $y=\Theta(x)$ if there exist $0<C_1\leq C_2<\infty$, such that for $x$ large enough, $C_1\leq\frac{y}{x}\leq C_2$ }.
 \end{itemize}
 Then, \[ \underset{u\to\infty}\lim{\frac{\mathbb{P}(X\geq v,~X+Y\geq u)}{\bar{F}_X(u)}}=1 \/.\]
 \end{theorem}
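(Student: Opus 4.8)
The plan is to show that the event $\{X \ge v,\, X+Y \ge u\}$ coincides, up to lower-order terms, with $\{X \ge u\}$: the constraint $X \ge v$ with $v = \Theta(u)$ forces almost all the mass of the joint event to come from $X$ alone being of order $u$, while the contribution of $Y$ is asymptotically negligible. I would begin by isolating the dominant part. Writing $\bar{F}_X(t) = \mathbb{P}(X \ge t)$ and partitioning according to whether $X \ge u$ or $v \le X < u$ (the case $X < v$ contributing nothing), and using that $\{X \ge u\} \subseteq \{X \ge v,\, X+Y \ge u\}$ once $u$ is large enough that $v \le \kappa_2 u < u$, one gets the exact decomposition
\begin{equation*}
\mathbb{P}(X \ge v,\, X + Y \ge u) = \bar{F}_X(u) + \mathbb{P}(v \le X < u,\, X + Y \ge u).
\end{equation*}
Since the remainder is nonnegative, this already gives $\mathbb{P}(X \ge v,\, X+Y \ge u) \ge \bar{F}_X(u)$, and the whole problem reduces to proving that the remainder is $o(\bar{F}_X(u))$.

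For the remainder I would condition on $Y$, exploiting the independence of $X$ and $Y$ and keeping the law of $X$ in Stieltjes form so as to avoid assuming a density for $X$. Conditionally on $Y=y$ the event is $\{\max(v,u-y) \le X < u\}$, so splitting the range of $y$ at $u-v$ yields
\begin{equation*}
\mathbb{P}(v \le X < u,\, X + Y \ge u) = \int_{[0,u-v]} \bigl[\bar{F}_X(u-y) - \bar{F}_X(u)\bigr]\, dF_Y(y) + \bigl[\bar{F}_X(v) - \bar{F}_X(u)\bigr]\, \mathbb{P}(Y > u-v).
\end{equation*}
The second term is immediate: the hypothesis $\bar{F}_X(y)/\bar{F}_X(x) = O(1)$ for $y = \Theta(x)$ gives $\bar{F}_X(v) = O(\bar{F}_X(u))$ since $\kappa_1 \le v/u \le \kappa_2$, while $\mathbb{P}(Y > u-v) \to 0$ because $u-v \ge (1-\kappa_2)u \to \infty$ and $Y$ is a proper random variable; hence this term is $o(\bar{F}_X(u))$.

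The main obstacle is the integral $\int_{[0,u-v]}[\bar{F}_X(u-y) - \bar{F}_X(u)]\, dF_Y(y)$, whose domain of integration grows with $u$, so pointwise tail asymptotics are not enough on their own. I would fix $\epsilon > 0$ and split at a large constant $T$. On $[0,T]$, monotonicity of $\bar{F}_X$ bounds the integrand by $\bar{F}_X(u-T) - \bar{F}_X(u)$, which is $o(\bar{F}_X(u))$ for each fixed $T$ by the long-tailedness of $F_X$ (a consequence of sub-exponentiality). On $[T,u-v]$, one has $u-y \ge v$, so $\bar{F}_X(u-y) \le \bar{F}_X(v) \le C\,\bar{F}_X(u)$ by the second hypothesis, and this part is at most $C\,\bar{F}_X(u)\,\mathbb{P}(Y \ge T)$; choosing $T = T(\epsilon)$ with $\mathbb{P}(Y \ge T) < \epsilon$ bounds it by $C\epsilon\,\bar{F}_X(u)$. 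Letting $u \to \infty$ and then $\epsilon \to 0$ shows the integral is $o(\bar{F}_X(u))$. Assembling the three estimates gives $\mathbb{P}(X \ge v,\, X+Y \ge u) = \bar{F}_X(u)\,(1 + o(1))$, which is exactly the asserted limit.
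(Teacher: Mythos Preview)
The paper does not actually prove this theorem: it is quoted from \cite{AR1} and used as a black box, so there is no ``paper's own proof'' to compare against. Your argument stands on its own and is correct. The decomposition
\[
\mathbb{P}(X\ge v,\,X+Y\ge u)=\bar F_X(u)+\mathbb{P}(v\le X<u,\,X+Y\ge u)
\]
is exact once $v\le u$, and the treatment of the remainder is sound: the term $[\bar F_X(v)-\bar F_X(u)]\,\mathbb{P}(Y>u-v)$ is $o(\bar F_X(u))$ exactly as you say, and for the integral the split at a fixed level $T$ works because sub-exponential distributions are long-tailed (so $\bar F_X(u-T)/\bar F_X(u)\to 1$), while on $[T,u-v]$ the uniform bound $\bar F_X(u-y)\le \bar F_X(v)\le C\,\bar F_X(u)$ follows from the second hypothesis with the fixed constants $\kappa_1,\kappa_2$. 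The $\epsilon$--$T$ argument is the standard device for this kind of convolution-tail estimate and is carried out cleanly.

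One small point worth making explicit in a final write-up: the implication ``sub-exponential $\Rightarrow$ long-tailed'' that you invoke for the $[0,T]$ piece is a classical fact (see e.g.\ Embrechts--Goldie--Veraverbeke or Asmussen), but since the paper only records the convolution property $\overline{F^{*d}}/\bar F\to d$, a one-line citation or justification would not be out of place.
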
  
 \subsubsection{The asymptotic behavior}~~\\
 Here we examine the asymptotic behavior of the optimal allocation by minimizing the indicators $I$ and $J$ in the cases of some sub-exponential distributions.\\
  In what follows, $(u_1,\ldots,u_d)$ denotes the optimal allocation of $u$ associated to the risk indicator $I$.
  \begin{theorem}
   Let $(X_1,X_2,\ldots,X_d)$ be continuous positive and independent random variables.\\ Assume that $\forall (i,j)\in\{{{1,2,\ldots,d}}\}^{2}$:
   \begin{enumerate}
   \item $\bar{F}_{X_i}(x)\stackrel{ x\rightarrow+\infty}{=}\Theta(\bar{F}_{X_j}(x))$,
   \item $\bar{F}_{X_i}(s)\stackrel{s\rightarrow +\infty}{=}o(\bar{F}_{X_i}(t))$, if $t=o(s)$,
   \end{enumerate}
   then, there exist $\kappa_1>0$ and $\kappa_2<1$ such that, 
   \begin{align}
      \kappa_1\leq \frac{u_\ell}{u} \leq \kappa_2 ~~~ \forall \ell\in \{1,2,\ldots,d\}\/.
      \label{k1k22}
   \end{align}
   \label{th2.6}
  \end{theorem}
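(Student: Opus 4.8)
The plan is to reduce the two-sided estimate \eqref{k1k22} to a single statement: it suffices to show that $\liminf_{u\to\infty}\alpha_\ell>0$ for every $\ell$, where $\alpha_\ell=u_\ell/u$. Indeed, if all coordinates are bounded below by some $\kappa_1>0$ for $u$ large, then since $\sum_{k}\alpha_k=1$ we automatically get $\alpha_i=1-\sum_{j\neq i}\alpha_j\leq 1-(d-1)\kappa_1=:\kappa_2<1$, so the upper bound comes for free. Thus the entire work concentrates on ruling out that a coordinate collapses to $0$.

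\textbf{Two-sided control of the optimality quantities.} The main tool is a control of $P_i:=\mathbb{P}(X_i>u_i,\,S\leq u)$, which by the optimality condition \eqref{ZoneO} are all equal, with common value $P>0$ (interior optimum, by the density-support assumption). For the upper bound I use $P_i\leq\bar{F}_{X_i}(u_i)$. For the lower bound, writing $S=X_i+S^{-i}$ and using the independence of $X_i$ and $S^{-i}$, I restrict the event to $\{S^{-i}\leq\varepsilon u\}$ for a fixed small $\varepsilon\in(0,1)$ and, using that $\bar{F}_{X_i}$ is decreasing, obtain (for $u$ large, so that $u_i\leq(1-\varepsilon)u$)
\[
P_i\;\geq\;\bigl[\bar{F}_{X_i}(u_i)-\bar{F}_{X_i}((1-\varepsilon)u)\bigr]\,\mathbb{P}(S^{-i}\leq\varepsilon u),
\]
where $\mathbb{P}(S^{-i}\leq\varepsilon u)\to 1$ as $u\to\infty$ because $S^{-i}$ is a fixed almost surely finite variable.

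\textbf{Core contradiction.} Suppose, along some sequence $u\to\infty$, that $\alpha_\ell\to 0$ for some $\ell$. By compactness of the simplex I pass to a subsequence along which every $\alpha_k$ converges, and by the pigeonhole bound $\max_k\alpha_k\geq 1/d$ I fix an index $m$ with $\liminf\alpha_m\geq 1/d$; in particular $u_m$ is of order $u$ while $u_\ell=o(u)$. Then $P_m\leq\bar{F}_{X_m}(u_m)$, which by hypothesis $(1)$ is $\Theta(\bar{F}_{X_\ell}(u_m))$; since $u_\ell=o(u_m)$, hypothesis $(2)$ gives $\bar{F}_{X_\ell}(u_m)=o(\bar{F}_{X_\ell}(u_\ell))$, hence $P_m=o(\bar{F}_{X_\ell}(u_\ell))$. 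On the other hand, the lower bound applied to $i=\ell$, together with $\bar{F}_{X_\ell}((1-\varepsilon)u)=o(\bar{F}_{X_\ell}(u_\ell))$ (again by $(2)$, as $u_\ell=o((1-\varepsilon)u)$) and $\mathbb{P}(S^{-\ell}\leq\varepsilon u)\to1$, gives $P_\ell\geq(1-o(1))\,\bar{F}_{X_\ell}(u_\ell)$. Therefore $P_m/P_\ell\to0$, contradicting $P_m=P_\ell>0$. This forces $\liminf\alpha_\ell>0$ for each $\ell$, and the reduction above concludes.

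\textbf{Expected obstacle.} I expect the delicate point to be the bookkeeping of the convergent-subsequence argument, so that the resulting constants $\kappa_1,\kappa_2$ are genuinely uniform in $u$, together with the careful use of $(2)$ to compare a tail at an argument of order $u$ against one that is $o(u)$. The probabilistic estimates themselves, relying only on independence and the monotonicity of $\bar{F}_{X_i}$, are routine once the event $\{S\leq u\}$ is split at $\{S^{-i}\leq\varepsilon u\}$.
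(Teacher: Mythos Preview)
Your proof is correct and follows essentially the same strategy as the paper: assume by contradiction that some $\alpha_\ell\to 0$ along a subsequence, pick an index $m$ with $\alpha_m$ bounded away from $0$, and derive a contradiction with the optimality identity $P_\ell=P_m$ by showing $P_\ell\sim\bar F_{X_\ell}(u_\ell)$ while $P_m=o(\bar F_{X_\ell}(u_\ell))$ via hypotheses (1)--(2). The only technical differences are cosmetic: the paper works with the complementary quantity $\mathbb{P}(X_i>u_i,\,S>u)$ and splits at the sublinear threshold $S^{-i}\lessgtr\sqrt{u}$ (treating the case of bounded $u_i$ separately), whereas you lower-bound $P_\ell$ directly by restricting to $\{S^{-\ell}\leq\varepsilon u\}$, which has the minor advantage of handling bounded and unbounded $u_\ell$ uniformly.
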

  Note that the first condition of Theorem \ref{th2.6} is not satisfied for exponential distributions. However, Pareto distributions satisfy the hypothesis of Theorem \ref{th2.6}.
  \begin{proof}
   Taking if necessary a convergent subsequence, we assume that $\exists i\in\{1,\ldots,d\}$ such that: $\frac{u_i}{u}\stackrel{u\rightarrow +\infty}{\longrightarrow}1$ or $\frac{u_i}{u}\stackrel{u\rightarrow +\infty}{\longrightarrow}0$, the first case implies that foll all $j\neq i$, $\frac{u_j}{u}\stackrel{u\rightarrow +\infty}{\longrightarrow}0$, then, it is sufficient to prove that the existence of an $i\in\{1,\ldots,d\}$ such that $\frac{u_i}{u}\stackrel{u\rightarrow +\infty}{\longrightarrow}0$ is impossible.\\Let us assume the existence of $i\in{\{1,\ldots,d\}}$, such that: $\frac{u_i}{u}\stackrel{u\rightarrow +\infty}{\longrightarrow}0$.\\
    Then, $\exists j\in{\{1,\ldots,d\}\backslash i}$ such that $\underset{u\rightarrow +\infty}{\lim}\frac{u_j}{u}\in]0,1]$, therefore, $u_j\stackrel{u\rightarrow +\infty}{\longrightarrow}+\infty$ and $u_i\stackrel{u\rightarrow +\infty}{=}o(u_j)$.\\
    Using Assumptions (1) and (2), we deduce that:
    \begin{align}
    \frac{\bar{F}_{X_j}(u_j)}{\bar{F}_{X_i}(u_i)}\stackrel{u\rightarrow +\infty}{\longrightarrow}0
    \label{eq21}\/.
    \end{align}
    The optimality condition \eqref{ZoneO} can also be written for all $j\neq i$ as follows:
    \begin{align}
    \bar{F}_{X_i}(u_{i})-\mathbb{P}\left( X_i>u_{i},S>u\right)=\bar{F}_{X_j}(u_{j})-\mathbb{P}\left( X_i>u_{j},S>u\right) 
    \label{opti2}\/.
    \end{align}
    That presents a trivial contradiction if $u_i$ remains bounded.\\
   Now, assume that $u_i\rightarrow+\infty$. Recall that $S^{-i}=\displaystyle\sum_{k=1,k\neq i}^{d}{X_k}$, then: \[ 
   \mathbb{P}\left( X_i>u_{i},S>u\right)=\mathbb{P}\left( X_i>u_{i},S^{-i}>\sqrt{u},S>u\right)+\mathbb{P}\left( X_i>u_{i},S^{-i}<\sqrt{u},S>u\right)
    \/.\]
    We have:
   \[ \mathbb{P}\left( X_i>u_{i},S^{-i}>\sqrt{u},S>u\right)\leq \mathbb{P}\left( X_i>u_{i}\right)\mathbb{P}\left(S^{-i}>\sqrt{u}\right)\stackrel{u\rightarrow +\infty}{=}o(\bar{F}_{X_i}(u_{i}))\/. \] 
   Using assumption (2) and since $u_i=o(u)$, 
   \[ \mathbb{P}\left( X_i>u_{i},S^{-i}<\sqrt{u},S>u\right)\leq\bar{F}_{X_i}(u-\sqrt{u})\stackrel{u\rightarrow +\infty}{=}o(\bar{F}_{X_i}(u_{i})). \]
   We deduce that:\begin{align}
   \mathbb{P}\left( X_i>u_{i},S>u\right)  \stackrel{u\rightarrow +\infty}{=}o(\bar{F}_{X_i}(u_{i}))
   \label{eq22}\/.
   \end{align}
   We remark also that: 
   \begin{align}
      \mathbb{P}\left( X_j>u_{j},S>u\right)  \stackrel{u\rightarrow +\infty}{=}O(\bar{F}_{X_j}(u_{j}))\stackrel{u\rightarrow +\infty}{=}o(\bar{F}_{X_i}(u_{i}))
      \label{eq23}\/.
      \end{align}
  Equation (\ref{opti2}) leads to: 
  \[ 1-\underbrace{\frac{\mathbb{P}\left( X_i>u_{i},S>u\right)}{\bar{F}_{X_i}(u_{i})}}_{T_1}=\underbrace{\frac{\bar{F}_{X_j}(u_{j})}{\bar{F}_{X_i}(u_{i})}}_{T_2}-\underbrace{\frac{\mathbb{P}\left( X_j>u_{j},S>u\right)}{\bar{F}_{X_i}(u_{i})}}_{T_3} \/.\]
  Now, relations: (\ref{eq22}), (\ref{eq21}), and (\ref{eq23}), imply that $T_1$, $T_2$, and $T_3$, go to zero, and this is a contradiction.         
  \end{proof}
  \begin{Proposition}
  \label{subth1}
Let $X_1\/,\ldots\/, X_d$ be continuous, positive and independent random variables such that the support of the density of $(X_i\/,S)$ is $(\R^+)^2$. Let $(u_1\/,\ldots\/, u_d)$ be the optimal allocation of $u$ associated to the risk indicator $I$. We assume:
\begin{enumerate}
\item there exist $0<\kappa_1<\kappa_2<1$ such that for all $i=1\/,\ldots\/, d$ and for all $u\in\R^+$,
$$\kappa_1\leq \frac{u_i}u\leq \kappa_2\/,$$
\item for all $i = 1\/, \ldots\/, d$, if $y=y(x)$ is such that 
$$0< \liminf_{x\rightarrow\infty}\frac{y}x\leq \limsup_{x\rightarrow\infty}\frac{y}x<1$$
then
$$\frac{\overline{F}_{X_i}(x)}{\overline{F}_{X_i}(y)} \stackrel{x\rightarrow\infty}{\longrightarrow} 0 \/.$$
\end{enumerate}
Then, for all $i\/,j=1\/,\ldots\/,d$,
$$\frac{\overline{F}_{X_i}(u_i)}{\overline{F}_{X_j}(u_j)}\stackrel{u\rightarrow \infty}{\longrightarrow} 1\/.$$
 \end{Proposition}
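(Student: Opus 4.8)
The plan is to use the optimality condition \eqref{ZoneO} written in complementary form, and to show that the contributions coming from the event $\{S>u\}$ are negligible with respect to the marginal tails. For a fixed pair $(i,j)$, using $\mathbb{P}(X_k>u_k,S\le u)=\overline{F}_{X_k}(u_k)-\mathbb{P}(X_k>u_k,S>u)$, the condition \eqref{ZoneO} becomes
\[
\overline{F}_{X_i}(u_i)-\mathbb{P}(X_i>u_i,S>u)=\overline{F}_{X_j}(u_j)-\mathbb{P}(X_j>u_j,S>u)\/.
\]
Setting $r=\overline{F}_{X_i}(u_i)/\overline{F}_{X_j}(u_j)$ and $\delta_k=\mathbb{P}(X_k>u_k,S>u)/\overline{F}_{X_k}(u_k)$, and dividing by $\overline{F}_{X_j}(u_j)$, this reads $r(1-\delta_i)=1-\delta_j$. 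Thus the whole statement reduces to the single estimate $\delta_k\to 0$ as $u\to\infty$ for every $k$: once this is known, $1-\delta_i\neq 0$ for $u$ large and $r=(1-\delta_j)/(1-\delta_i)\to 1$, which is exactly the claim.

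The core step is therefore to prove $\mathbb{P}(X_i>u_i,S>u)=o(\overline{F}_{X_i}(u_i))$, and I would do this by the same splitting used in the proof of Theorem \ref{th2.6}. Writing $S=X_i+S^{-i}$ with $S^{-i}=\sum_{k\neq i}X_k$ and cutting at the level $\sqrt{u}$,
\[
\mathbb{P}(X_i>u_i,S>u)=\mathbb{P}(X_i>u_i,S^{-i}>\sqrt{u},S>u)+\mathbb{P}(X_i>u_i,S^{-i}\le\sqrt{u},S>u)\/.
\]
For the first summand, independence of $X_i$ and $S^{-i}$ bounds it by $\overline{F}_{X_i}(u_i)\,\mathbb{P}(S^{-i}>\sqrt{u})$; since $S^{-i}$ is almost surely finite, $\mathbb{P}(S^{-i}>\sqrt{u})\to 0$, so this summand is $o(\overline{F}_{X_i}(u_i))$. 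For the second summand, on the event $\{S^{-i}\le\sqrt{u},\,X_i+S^{-i}>u\}$ one has $X_i>u-\sqrt{u}$, whence the summand is at most $\overline{F}_{X_i}(u-\sqrt{u})$.

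It then remains to show $\overline{F}_{X_i}(u-\sqrt{u})=o(\overline{F}_{X_i}(u_i))$, which is where assumptions (1) and (2) enter. I would apply assumption (2) with $x=u-\sqrt{u}$ and $y=u_i$: by assumption (1), $u_i/u\in[\kappa_1,\kappa_2]$, so $y/x=(u_i/u)/(1-u^{-1/2})$ satisfies $0<\kappa_1\le\liminf_{u\to\infty}(y/x)\le\limsup_{u\to\infty}(y/x)\le\kappa_2<1$, and assumption (2) yields $\overline{F}_{X_i}(u-\sqrt{u})/\overline{F}_{X_i}(u_i)=\overline{F}_{X_i}(x)/\overline{F}_{X_i}(y)\to 0$. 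Combining the two summands gives $\delta_i\to 0$, and the same argument with $j$ gives $\delta_j\to 0$, so the proof is complete.

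I expect the main obstacle to be precisely this last step: keeping the roles of the two arguments in assumption (2) straight — the \emph{larger} argument $u-\sqrt{u}$ must sit in the numerator and the \emph{smaller} one $u_i$ in the denominator — and handling the $u$-dependence of the ratio $\alpha_i=u_i/u$ by passing to a convergent subsequence, as announced just before the subsection. I note that Theorem \ref{thZO} provides an alternative route to the estimate $\mathbb{P}(X_i>u_i,S>u)=o(\overline{F}_{X_i}(u_i))$ when $X_i$ is sub-exponential, but the direct splitting argument above is self-contained and uses only the hypotheses (1) and (2) together with independence.
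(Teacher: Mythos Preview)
Your proof is correct and follows essentially the same route as the paper: rewrite \eqref{ZoneO} in complementary form, show $\mathbb{P}(X_k>u_k,S>u)=o(\overline{F}_{X_k}(u_k))$ by splitting on the size of $S^{-k}$, and conclude. The only difference is cosmetic: the paper cuts at a level $\gamma u$ with $0<\gamma<1-\kappa_2$ (so that $u_i/((1-\gamma)u)\le \kappa_2/(1-\gamma)<1$ directly), whereas you cut at $\sqrt{u}$ as in Theorem~\ref{th2.6} and then use $1-u^{-1/2}\to 1$ to recover the same bound on $\limsup y/x$; both choices yield the estimate needed to invoke assumption~(2).
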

 Assumptions of Proposition \ref{subth1} are satisfied for distributions of exponential type (see remark~(\ref{rem3.9}) below). Its application gives another proof to Proposition \ref{pro3.2}. In contrast, Proposition \ref{subth1} cannot be used for Pareto distributions.
\begin{proof}
Following the lines of the proof of Theorem \ref{th2.6}, take $0<\gamma<1-\kappa_2$,
$$\mathbb{P}(X_i > u_i\/, \ S>u) = \mathbb{P}(X_i >u_i \/, \ S^{-i} > \gamma u \/, \ S>u) + \mathbb{P}(X_i >u_i \/, \ S^{-i} > \gamma u \/, \ S>u) \/.$$
As before, 
$$\mathbb{P}(X_i >u_i \/, \ S^{-i} > \gamma u \/, \ S>u)\leq \mathbb{P}(X_i > u_i ) \mathbb{P}(S^{-i} > \gamma u) \stackrel{u\rightarrow +\infty}{=} o(\overline{F}_{X_i}(u_i)) \/.$$
On the other hand,
\begin{eqnarray*}
\mathbb{P}(X_i >u_i \/, \ S^{-i} > \gamma u \/, \ S>u) & \leq & \mathbb{P}(X_i> u-\gamma u)\\
	&=& \overline{F}_{X_i}((1-\gamma)u) \stackrel{u\rightarrow +\infty}{=} o(\overline{F}_{X_i}(u_i)) \\
	&& \mbox{because} \ 0<\frac{\kappa_1}{1-\gamma}\leq \frac{u_i}{(1-\gamma)u}\leq \frac{\kappa_2}{1-\gamma}<1 \/.\\
\end{eqnarray*}
So that, $\mathbb{P}(X_i > u_i\/, \ S>u) \stackrel{u\rightarrow +\infty}{=} o(\overline{F}_{X_i}(u_i))$ and the same computation gives $\mathbb{P}(X_j > u_j\/, \ S>u) \stackrel{u\rightarrow +\infty}{=} o(\overline{F}_{X_j}(u_j))$. Now, $u_i$ and $u_j$ satisfy Equation (1) and thus,
$$1 + o(1) \stackrel{u\rightarrow +\infty}{=} \frac{\overline{F}_{X_j}(u_j)}{\overline{F}_{X_i}(u_i)} + o(1) \frac{\overline{F}_{X_j}(u_j)}{\overline{F}_{X_i}(u_i)}\/.$$
This implies that $\frac{\overline{F}_{X_j}(u_j)}{\overline{F}_{X_i}(u_i)}$ is bounded from above and thus
$$ \frac{\overline{F}_{X_j}(u_j)}{\overline{F}_{X_i}(u_i)}  \stackrel{u\rightarrow +\infty}{=} 1 + o(1) \/.$$
\end{proof}
  \begin{remark}\label{rem3.9}
  We remark that the hypothesis of Proposition \ref{subth1} are satisfied for distribution of {\em exponential type}, that is distributions verifying:\[ \bar{F}_{X_i}(x)=\Theta(e^{-\mu_ix}) \/.\]
  Indeed, in this case, we have $0<\varliminf\frac{u_i}{u}\leq\varlimsup\frac{u_i}{u}<1,~~\forall i\in\{1,\ldots,d\}$. In fact, if $u_i\stackrel{u\rightarrow +\infty}{=}o(u)$, then, $\exists j\in\{1,\ldots,d\}\backslash\{i\}$ such that $\frac{u_j}{u}\rightarrow \kappa\in]0,1]$, so $u_i\stackrel{u\rightarrow +\infty}{=}o(u_j)$.\\
  Since $\mu_i,\mu_j>0$, $\mu_iu_i\stackrel{u\rightarrow +\infty}{=}o(\mu_ju_j)$. 
  As in the proof of Theorem \ref{th2.6}, we get:
  \begin{align*}
     \mathbb{P}\left( X_i>u_{i},S>u\right)  \stackrel{u\rightarrow +\infty}{=}o(\bar{F}_{X_i}(u_{i}))
     \end{align*}
  and, 
  \begin{align*}
       \mathbb{P}\left( X_j>u_{j},S>u\right)  \stackrel{u\rightarrow +\infty}{=}o(\bar{F}_{X_j}(u_{j}))\/.
       \end{align*}
  From the optimality condition, \[ \frac{\bar{F}_{X_i}(u_{i})}{e^{-\mu_iu_i}}-\underbrace{\frac{\mathbb{P}\left( X_i>u_{i},S>u\right)}{e^{-\mu_iu_i}}}_{T_1}=\underbrace{\frac{\bar{F}_{X_j}(u_{j})}{e^{-\mu_iu_i}}}_{T_2}-\underbrace{\frac{\mathbb{P}\left( X_j>u_{j},S>u\right)}{e^{-\mu_iu_i}}}_{T_3}\/, \]
  which is absurd because as $u\rightarrow+\infty$:\begin{itemize}
  \item $\bar{F}_{X_i}(u_{i})=\Theta(e^{-\mu_iu_i})$,
  \item $T_1=o(1)$,
  \item $T_2=\frac{\bar{F}_{X_j}(u_{j})}{e^{-\mu_ju_j}}e^{-\mu_ju_j+\mu_iu_i}\rightarrow 0$, since $\mu_iu_i=o(\mu_ju_j)$,
  \item and $T_3=o(1)e^{-\mu_ju_j+\mu_iu_i}\rightarrow 0$.
  \end{itemize}
  \end{remark}
 \begin{Proposition}[The asymptotic optimal allocation for the indicator $I$]
 \label{subth2}
  Under the same conditions of Theorem \ref{th2.6}, and if, for all $i\in\{1,\ldots,d\}$, $F_{X_i}$ is a sub-exponential distribution, that verifies:
  \[ \frac{\bar{F}_{X_i}(y)}{\bar{F}_{X_i}(x)}\stackrel{x\rightarrow+\infty}{=}O(1),~~\text{for}~~0<\kappa_1\leq\frac{y}{x}\leq\kappa_2<1 \/.\]
   Then, by minimizing the $I$ indicator, $u_i$ and $u_j$ satisfy:
  \begin{equation}
        \bar{F}_{X_i}(u_i)-\bar{F}_{X_i}(u)\stackrel{u\rightarrow +\infty}{=}\bar{F}_{X_j}(u_j)-\bar{F}_{X_j}(u)+o(\bar{F}_{X_i}(u))\/.
        \label{eqsub2}
        \end{equation}
   \end{Proposition}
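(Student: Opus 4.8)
The plan is to begin from the optimality condition \eqref{ZoneO} and recast it as a relation between the marginal survival functions evaluated at the allocation components, the remainder being carried by the joint tail probabilities $\mathbb{P}(X_i > u_i, S > u)$. Since the $X_k$ are continuous, splitting $\bar F_{X_i}(u_i) = \mathbb{P}(X_i > u_i, S \le u) + \mathbb{P}(X_i > u_i, S > u)$ (and likewise for $j$) turns \eqref{ZoneO} into
\[
\bar F_{X_i}(u_i) - \mathbb{P}(X_i > u_i, S > u) = \bar F_{X_j}(u_j) - \mathbb{P}(X_j > u_j, S > u).
\]
Thus the whole matter reduces to estimating the two joint tail terms.

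The key step is to apply Theorem~\ref{thZO} with $X = X_i$, $Y = S^{-i} = \sum_{k \ne i} X_k$ and $v = u_i$. Its hypotheses are met: $X_i$ is sub-exponential and $S^{-i}$ is positive and independent of $X_i$ by assumption; the bound $\kappa_1 \le u_i/u \le \kappa_2$ is exactly the conclusion \eqref{k1k22} of Theorem~\ref{th2.6}, whose assumptions we are granting; and the ratio control $\bar F_{X_i}(y)/\bar F_{X_i}(x) = O(1)$ for $y/x$ in a compact subinterval of $(0,1)$ is precisely the extra hypothesis imposed here. Theorem~\ref{thZO} then yields $\mathbb{P}(X_i > u_i, S > u) = \bar F_{X_i}(u)(1 + o(1)) = \bar F_{X_i}(u) + o(\bar F_{X_i}(u))$, and the identical estimate holds with $j$ in place of $i$.

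Substituting these two asymptotics into the recast optimality condition and rearranging gives
\[
\bar F_{X_i}(u_i) - \bar F_{X_i}(u) = \bar F_{X_j}(u_j) - \bar F_{X_j}(u) + o(\bar F_{X_i}(u)) + o(\bar F_{X_j}(u)).
\]
To conclude, I would absorb the last error into the first: Assumption~(1) of Theorem~\ref{th2.6} gives $\bar F_{X_i} = \Theta(\bar F_{X_j})$, hence $\bar F_{X_j}(u) = \Theta(\bar F_{X_i}(u))$ and $o(\bar F_{X_j}(u)) = o(\bar F_{X_i}(u))$, which delivers \eqref{eqsub2}. The one point requiring care is the verification of the hypotheses of Theorem~\ref{thZO}---in particular reconciling the local ratio condition stated here (valid for $y/x \in [\kappa_1,\kappa_2]$) with the $y = \Theta(x)$ formulation of that theorem---but since here $v/u = u_i/u$ lands precisely in $[\kappa_1,\kappa_2]$, the application is legitimate; the rest is bookkeeping with the $o$ and $\Theta$ symbols.
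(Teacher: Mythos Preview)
Your proof is correct and follows exactly the approach the paper intends: the paper's own proof consists of the single sentence ``The proof of this theorem is a direct application of Theorems~\ref{thZO} and~\ref{th2.6},'' and you have faithfully unpacked that application, including the rewriting of \eqref{ZoneO} via the split of $\bar F_{X_i}(u_i)$, the invocation of Theorem~\ref{thZO} with $X=X_i$, $Y=S^{-i}$, $v=u_i$, and the absorption of $o(\bar F_{X_j}(u))$ into $o(\bar F_{X_i}(u))$ via the $\Theta$-comparability assumption.
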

   Proposition \ref{subth2} is applicable in the case of Pareto distributions. So, we will use it for determining the optimal asymptotic allocation, for independent risks of Pareto distributions in the next subsection.
   \begin{proof}
   The proof of this theorem is a direct application of Theorems \ref{thZO} and \ref{th2.6}.
    \end{proof}
Now, we focus on the asymptotic optimal allocation by minimizing the risk indicator $J$, and we study the case of sub-exponential distribution family. 
\begin{Proposition}[The asymptotic optimal allocation for the indicator $J$] \label{prpZV}
Let $(X_1,X_2,\ldots,X_d)$ be continuous positive and independent random variables, such that there exists $i\in\{1,\ldots,d\}$ with a sub-exponential distribution, the optimal capital allocation by minimizing the $J$ indicator $(u_1,\ldots,u_d)$ verifies, for all $~j\neq i$:
\[ \underset{u\to\infty}\lim{\frac{\mathbb{P}(X_j\geq u_j,~S\geq u)}{\bar{F}_{X_i}(u)}}=1 \/.\]
\end{Proposition}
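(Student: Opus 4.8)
The plan is to reduce the claim, through the optimality condition satisfied by the $J$-allocation, to a single sub-exponential tail estimate, which is exactly Theorem \ref{thZO}. The optimal allocation obeys (\ref{ZoneV}), so that all the quantities $\mathbb{P}(X_k\geq u_k,\,S\geq u)$, $k=1,\ldots,d$, coincide (the variables being continuous, the strict and non-strict inequalities define the same events). In particular $\mathbb{P}(X_j\geq u_j,\,S\geq u)=\mathbb{P}(X_i\geq u_i,\,S\geq u)$ for every $j\neq i$, where $i$ is the index carrying the sub-exponential law. It therefore suffices to prove the single limit
$$\lim_{u\to\infty}\frac{\mathbb{P}(X_i\geq u_i,\,S\geq u)}{\bar{F}_{X_i}(u)}=1,$$
and the statement for all $j\neq i$ follows immediately.

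For the lower bound I would simply note that, since $u_i\leq u$ and each $X_k$ is non-negative, one has $\{X_i\geq u\}\subseteq\{X_i\geq u_i,\,S\geq u\}$, because $X_i\geq u$ forces both $X_i\geq u_i$ and $S\geq X_i\geq u$. Hence $\mathbb{P}(X_i\geq u_i,\,S\geq u)\geq\bar{F}_{X_i}(u)$ and the ratio is at least $1$ for every $u$.

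The substance of the proof is the matching upper bound. Writing $Y=S^{-i}=\sum_{k\neq i}X_k$, which is non-negative, independent of $X_i$ and satisfies $X_i+Y=S$, I would recognise that $\mathbb{P}(X_i\geq u_i,\,X_i+Y\geq u)\sim\bar{F}_{X_i}(u)$ is precisely the conclusion of Theorem \ref{thZO} applied with $X=X_i$ and $v=u_i$; combined with the lower bound this yields the limit $1$. Equivalently, one may isolate the boundary contribution through the exact splitting
$$\mathbb{P}(X_i\geq u_i,\,S\geq u)=\bar{F}_{X_i}(u)+\mathbb{P}(u_i\leq X_i<u,\,X_i+Y\geq u),$$
and show that the second term is $o(\bar{F}_{X_i}(u))$ by cutting it according to $Y\geq\gamma u$ or $Y<\gamma u$ for a small $\gamma>0$: on the first event independence gives the bound $\bar{F}_{X_i}(u_i)\,\mathbb{P}(Y\geq\gamma u)$, while on the second $X_i\geq(1-\gamma)u$ and the sub-exponential (long-tailed) behaviour of $\bar{F}_{X_i}$ is exploited, in the manner of the proof of Proposition \ref{subth1}.

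The main obstacle is the verification of the hypotheses of Theorem \ref{thZO} for the $J$-optimal allocation, namely the localisation $\kappa_1\leq u_i/u\leq\kappa_2<1$ together with the regularity estimate $\bar{F}_{X_i}(y)/\bar{F}_{X_i}(x)=O(1)$ for $y=\Theta(x)$; these are not among the hypotheses and must be secured separately, since otherwise the boundary band above need not be negligible. I expect the delicate point to be ruling out the degenerate regimes $u_i/u\to 0$ and $u_i/u\to 1$, which I would attack through the balance identity (\ref{ZoneV}) in the spirit of Theorem \ref{th2.6}, as it is precisely there that the single-big-jump character of the sub-exponential component $X_i$ must be used.
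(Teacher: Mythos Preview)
Your approach is essentially the paper's: use the optimality condition (\ref{ZoneV}) to equate all the $\mathbb{P}(X_k\geq u_k,\,S\geq u)$, divide by $\bar{F}_{X_i}(u)$, and invoke Theorem~\ref{thZO} for the index $i$; the paper's entire proof is just these two lines. Your additional lower-bound inclusion and the splitting argument are elaborations that the paper omits, and the hypothesis-verification issue you flag for Theorem~\ref{thZO} (the localisation $\kappa_1\leq u_i/u\leq\kappa_2$ and the $O(1)$ ratio condition) is simply not addressed in the paper's proof either---it applies Theorem~\ref{thZO} directly without checking them.
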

 \begin{proof}
 The solution to (\ref{ZoneV}) satisfies :
    \[ 
    \forall j\in\{{{1,2,\ldots,d}}\}, ~~~
    \frac{\mathbb{P}\left( X_i>u_{i},S\geq u\right)}{\mathbb{P}\left( X_i>u\right)} =
    \frac{\mathbb{P}\left( X_j>u_{j},S\geq u\right))}{\mathbb{P}\left( X_i>u\right)} 
     \/.\]
       When $u$ goes to $+\infty$, and using Theorem \ref{thZO}, we obtain Proposition \ref{prpZV}.
 \end{proof}
 \subsubsection{Application to Pareto independent distributions}~~\\
 We consider $d$ independent random variables 
  $(X_1,X_2,\ldots,X_d)$ of Pareto distributions, with parameters   $(a,b_i)_{\{i=1,2,\ldots,d\}}$ respectively, such that $b_1>b_2>\cdots>b_d>0$. Therefore, these distributions will be characterized by densities and survival functions of the following forms:
  \[ f_{X_i}(x)=\frac{a}{b_i}\left(1+\frac{x}{b_i}\right)^{-a-1} \/,\]
  and \[ \bar{F}_{X_i}(x)=\left(1+\frac{x}{b_i}\right)^{-a} \/.\]
   \begin{Proposition}[The asymptotic optimal allocation minimizing $I$]\label{prop3.12}Asymptotically, the unique solution to (\ref{ZoneO}) satisfies:
     \[ \forall (i,j)\in\{{{1,2,\ldots,d}}\}^2,~~~~\left(\frac{\underset{u\to\infty}\lim{\alpha_i}}{b_i}\right)^{-a}-\left(\frac{\underset{u\to\infty}\lim{\alpha_j}}{b_j}\right)^{-a}=\left(\frac{1}{b_i}\right)^{-a}-\left(\frac{1}{b_j}\right)^{-a} \/.\]
      \label{GPDO}
  \end{Proposition}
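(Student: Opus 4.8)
The plan is to recognize the statement as a direct specialization of Proposition \ref{subth2} to the Pareto family, so the work splits into verifying the hypotheses and then carrying out an explicit asymptotic expansion. First I would confirm that the independent Pareto variables meet every assumption invoked. For condition (1) of Theorem \ref{th2.6}, as $x\to\infty$,
$$\frac{\bar{F}_{X_i}(x)}{\bar{F}_{X_j}(x)}=\left(\frac{1+x/b_i}{1+x/b_j}\right)^{-a}\longrightarrow\left(\frac{b_j}{b_i}\right)^{-a}\in(0,\infty),$$
so $\bar{F}_{X_i}=\Theta(\bar{F}_{X_j})$; for condition (2), when $t=o(s)$ the quotient $(1+t/b_i)/(1+s/b_i)\to 0$, giving $\bar{F}_{X_i}(s)=o(\bar{F}_{X_i}(t))$; and the supplementary $O(1)$ bound of Proposition \ref{subth2} holds because for $\kappa_1\leq y/x\leq\kappa_2$ the ratio $\bar{F}_{X_i}(y)/\bar{F}_{X_i}(x)=\left((1+x/b_i)/(1+y/b_i)\right)^{a}$ stays bounded as $x\to\infty$. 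Since the Pareto law is sub-exponential, Proposition \ref{subth2} then applies and delivers Equation (\ref{eqsub2}).

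Next I would insert the explicit survival functions, writing $u_i=\alpha_i u$ and using that Theorem \ref{th2.6} confines each $\alpha_\ell$ to $[\kappa_1,\kappa_2]$ with $0<\kappa_1\leq\kappa_2<1$. Because $\alpha_i u\to\infty$, factoring out the leading power gives
$$\bar{F}_{X_i}(\alpha_i u)=\left(1+\frac{\alpha_i u}{b_i}\right)^{-a}=u^{-a}\left(\frac{\alpha_i}{b_i}\right)^{-a}\left(1+\frac{b_i}{\alpha_i u}\right)^{-a}=u^{-a}\left[\left(\frac{\alpha_i}{b_i}\right)^{-a}+o(1)\right],$$
and similarly $\bar{F}_{X_i}(u)=u^{-a}[(1/b_i)^{-a}+o(1)]$, with the analogous expansions for the index $j$.

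Finally I would divide Equation (\ref{eqsub2}) by $u^{-a}$. Since $\bar{F}_{X_i}(u)=\Theta(u^{-a})$, the remainder $o(\bar{F}_{X_i}(u))$ rescales to $o(1)$, and passing to a convergent subsequence of the bounded allocations (whose limit is then forced to be unique by uniqueness of the minimizer) and letting $u\to\infty$ yields
$$\left(\frac{\lim\alpha_i}{b_i}\right)^{-a}-\left(\frac{1}{b_i}\right)^{-a}=\left(\frac{\lim\alpha_j}{b_j}\right)^{-a}-\left(\frac{1}{b_j}\right)^{-a},$$
which is the claimed identity after rearranging. I expect the only delicate point --- the main obstacle --- to be the error bookkeeping: one must be sure that every discarded term is genuinely $o(u^{-a})$ once rescaled, in particular that the $o(\bar{F}_{X_i}(u))$ coming from Proposition \ref{subth2} and the $o(1)$ corrections from the $(1+b_i/(\alpha_i u))^{-a}$ factors do not interfere with the finite nonzero limits on either side, which is guaranteed precisely by $\kappa_1>0$ (so that $u_i\to\infty$ at the same rate as $u$) together with $\kappa_2<1$.
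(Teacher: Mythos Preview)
Your proposal is correct and follows exactly the route the paper takes: the paper's own proof is a single line, ``Follows from Proposition \ref{subth2},'' and you have simply filled in the verification of the hypotheses of Theorem \ref{th2.6} and Proposition \ref{subth2} for the Pareto family together with the explicit asymptotic expansion of $\bar{F}_{X_i}(\alpha_i u)$ and $\bar{F}_{X_i}(u)$ that makes Equation (\ref{eqsub2}) reduce to the stated identity.
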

  \begin{proof}
  Follows from Proposition \ref{subth2}.
    \end{proof}
   \begin{Proposition}[The asymptotic optimal allocation minimizing $J$]\label{prop3.13} 
      The unique solution to~(\ref{ZoneV}) satisfies: 
                \[ \underset{u\to\infty}\lim{\alpha_1}=1
          ~~and~~ \underset{u\to\infty}\lim{\alpha_i}=0,\forall i\in\{{{2,3,\ldots,d}}\}
          \/. \]
    \end{Proposition}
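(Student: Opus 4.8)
The plan is to argue along convergent subsequences, exactly as set up in the paragraph preceding Proposition~\ref{prp21}: it suffices to show that every limit point of $(\alpha_1,\ldots,\alpha_d)$ as $u\to\infty$ equals $(1,0,\ldots,0)$, and uniqueness of this limit point then yields the stated limits. So I fix a subsequence along which $\alpha_i\to\alpha_i^\ast\in[0,1]$ with $\sum_i\alpha_i^\ast=1$, write $Q(u):=\mathbb{P}(X_i>u_i,S\geq u)$ for the common value imposed by the optimality condition~\eqref{ZoneV}, and recall that each Pareto law is sub-exponential and satisfies $\bar{F}_{X_i}(cx)/\bar{F}_{X_i}(x)\to c^{-a}$, which is precisely the ratio hypothesis of Theorem~\ref{thZO}. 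I would then split the argument into two steps: first, that at most one coordinate $\alpha_i^\ast$ can be positive; second, that this coordinate must be the index of the heaviest tail, namely $i=1$.

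For the first step, suppose two indices $i\neq j$ had $\alpha_i^\ast,\alpha_j^\ast\in(0,1)$ (as soon as two limits are positive, each is automatically $<1$). Then for $u$ large both ratios $u_i/u$ and $u_j/u$ lie in a common interval $[\kappa_1,\kappa_2]\subset(0,1)$, so Theorem~\ref{thZO}, applied first with $X=X_i,\ Y=S^{-i}$ and then with $X=X_j,\ Y=S^{-j}$, gives $\mathbb{P}(X_i>u_i,S\geq u)\sim\bar{F}_{X_i}(u)$ and $\mathbb{P}(X_j>u_j,S\geq u)\sim\bar{F}_{X_j}(u)$. Since both left-hand sides equal $Q(u)$ by~\eqref{ZoneV}, this forces $\bar{F}_{X_i}(u)\sim\bar{F}_{X_j}(u)$, contradicting $\bar{F}_{X_i}(u)/\bar{F}_{X_j}(u)\to(b_i/b_j)^a\neq1$. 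Hence exactly one coordinate, say $\alpha_m^\ast$, is positive, so $\alpha_m^\ast=1$ and $\alpha_\ell^\ast=0$ for all $\ell\neq m$.

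For the second step I would pin down $m$ by comparing orders of magnitude. For the winning branch, the elementary bounds $\bar{F}_{X_m}(u)\leq Q(u)\leq\bar{F}_{X_m}(u_m)=\bar{F}_{X_m}(\alpha_m u)$, combined with $\alpha_m\to1$, squeeze $Q(u)\sim\bar{F}_{X_m}(u)$. For any other branch $j\neq m$, the inclusion $\{X_j\geq u\}\subset\{X_j>u_j,\,S\geq u\}$ yields $Q(u)\geq\bar{F}_{X_j}(u)$. Combining gives $\bar{F}_{X_j}(u)\leq(1+o(1))\bar{F}_{X_m}(u)$, hence $(b_j/b_m)^a\leq1$, i.e. $b_j\leq b_m$ for every $j$; since the $b_k$ are strictly decreasing this means $b_m=\max_k b_k=b_1$, so $m=1$.

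The main obstacle is really the first step: one must check that whenever two fractions stay bounded away from $0$ and $1$, the asymptotics of Theorem~\ref{thZO} apply simultaneously to both coordinates with a normalisation ($\bar{F}_{X_i}(u)$ and $\bar{F}_{X_j}(u)$ respectively) that lets the distinctness of the Pareto scale parameters be played against the equality~\eqref{ZoneV}; this is exactly the mechanism underlying Proposition~\ref{prpZV}. The only other delicate point is the double limit $\bar{F}_{X_m}(\alpha_m u)\sim\bar{F}_{X_m}(u)$, where $\alpha_m$ varies with $u$ while tending to $1$: writing the ratio as $\big((1+u/b_m)/(1+\alpha_m u/b_m)\big)^a\to1$ makes it rigorous. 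Finally, since every convergent subsequence produces the same limit point $(1,0,\ldots,0)$, the limit exists and equals it, which is the assertion of Proposition~\ref{prop3.13}.
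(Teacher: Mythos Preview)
Your argument is correct. The first step---ruling out two interior limits by applying Theorem~\ref{thZO} to each of the two coordinates and playing the resulting equivalences $Q(u)\sim\bar F_{X_i}(u)$ and $Q(u)\sim\bar F_{X_j}(u)$ against the distinctness of the scale parameters $b_k$---is exactly the mechanism the paper uses (there packaged through Proposition~\ref{prpZV}). Where you diverge from the paper is in identifying the winning index $m$. The paper invokes an external result: first-order stochastic dominance $X_d\preccurlyeq_{st}\cdots\preccurlyeq_{st} X_1$ together with the monotonicity property of the optimal allocation established in the companion paper \cite{P12015} yields $\alpha_1\geq\cdots\geq\alpha_d$ for every $u$, so the limit $1$ can only fall on $\alpha_1$. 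Your sandwich $\bar F_{X_m}(u)\leq Q(u)\leq\bar F_{X_m}(\alpha_m u)$ combined with the one-sided bound $\bar F_{X_j}(u)\leq Q(u)$ for $j\neq m$ is a genuinely different, more self-contained route: it requires nothing beyond the explicit Pareto tail ratios and avoids any appeal to the coherence axioms. The trade-off is that the paper's monotonicity argument actually delivers the stronger ordering $\alpha_1\geq\cdots\geq\alpha_d$ at every finite $u$, whereas your comparison only pins down the asymptotic winner.
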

    \begin{proof}
    We suppose that $\exists \in\{1,\ldots,d\}$ such that $0<\underset{u\to\infty}\lim{\alpha_j}<1$.\\
    From Theorem \ref{thZO} : \[ \underset{u\to\infty}\lim{\frac{\mathbb{P}(X_j\geq u_j,S\geq u)}{\bar{F}_{X_j}(u)}}=1 \/.\]
    On the other hand, and applying Proposition \ref{prpZV} in the Pareto distributions case, we get for $i\in\{1,\ldots,d\}\setminus\{j\}$:
     \[ \frac{\mathbb{P}(X_j\geq u_j,S\geq u)}{\bar{F}_{X_j}(u)}= \frac{\mathbb{P}(X_j\geq u_j,S\geq u)}{\bar{F}_{X_i}(u)}\cdot\frac{\bar{F}_{X_i}(u)}{\bar{F}_{X_j}(u)}\stackrel{ u\rightarrow+\infty}{\sim}\frac{\bar{F}_{X_i}(u)}{\bar{F}_{X_j}(u)}=\left(\frac{1+\frac{u}{b_i}}{1+\frac{u}{b_j}}\right)^{-a} \/,\]
  then, for $i\in\{1,\ldots,d\}\setminus\{j\}$: \[ \underset{u\to\infty}\lim{\frac{\mathbb{P}(X_j\geq u_j,S\geq u)}{\bar{F}_{X_j}(u)}}= \left(\frac{b_j}{b_i}\right)^{-a}\neq 1\/. \]   
 That is absurd. We deduce that
  $\forall i\in\{{{1,2,\ldots,d}}\}$:\[\underset{u\to\infty}\lim{\alpha_i}\in\{0,1\}\/, \]
  and since $\sum_{i=1}^{d}{\alpha_i}=1$, then, there is a unique $i$ such that $~ \underset{u\to\infty}\lim{\alpha_i}=1$ and for all $~j\neq i$ $~ \underset{u\to\infty}\lim{\alpha_j}=0$.\\ 
  Let us recall the definition of the order stochastic dominance, as it is presented in Shaked and Shanthikumar (2007)\cite{OrdreSto}. For random variables $X$ and $Y$, $Y$ first-order stochastically dominates $X$ if and only if:
  \[\bar{F}_X(x)\leq\bar{F}_Y(x), ~~\forall x\in\mathbb{R}^+ \/,\]
  and in this case we denote: $X\preccurlyeq_{st}Y$.\\ 
  Now, clearly  $X_d\preccurlyeq_{st}\cdots\preccurlyeq_{st}X_2\preccurlyeq_{st}X_1$ because $b_1>b_2>\cdots>b_d>0$.\\
  In \cite{P12015} (Proposition 3.10) we have proved that the optimal capital allocation satisfies the monotonicity property. We deduce that $\alpha_1\geq\cdots\geq\alpha_d$, and thus $\underset{u\to\infty}\lim{\alpha_1}=1$.
    \end{proof}
    \subsection{Comparison between the asymptotic behaviors in exponential and sub-exponential cases}
 Let us consider the following risk indicator:
    \begin{align*}
     \mathit{I_{loc}}(u_1,\ldots,u_d)=\sum_{k=1}^{d}{\mathbb{E}\left({(X_k-u_k)1\!\!1_{\{X_k>u_k\}}}\right)}=\sum_{k=1}^{d}{\mathbb{E}\left({(X_k-u_k)^+}\right)}\/.
     \end{align*}
With this indicator, the impact of each branch on the group is not taken into account. This indicator thus takes only \textit{local} effects into account. If, for any $k=1\/,\ldots \/,d$, the random vector $(X_k\/,S)$ admits a density whose support contains $[0\/,u]^2$, then the optimality condition associated to $ \mathit{I_{loc}}$ gives:
\[ \mathbb{P}(X_i>u_i)=\mathbb{P}(X_j>u_j) \/.\]
We remark that this corresponds to the asymptotic result for indicator $I$ of Proposition \ref {subth1}, where $ \frac{\overline{F}_{X_j}(u_j)}{\overline{F}_{X_i}(u_i)}  \stackrel{u\rightarrow +\infty}{=} 1 + o(1) $. In other words, in the case of {\em exponential type} and independent risks, the group effect is asymptotically negligible. This behavior is also clear in Proposition \ref{pro3.2}, where we found the asymptotic capital allocation for $I$ for independent exponential distributions as:
$$u_i=\dfrac{\dfrac{1}{\beta_i}}{\sum_{j=1}^{d}{\dfrac{1}{\beta_j}}}u,~\text{for all}~i=1\/,\ldots\/,d\/,$$ as for $I_{loc}$.\\

Proposition \ref{subth2} shows that for independent sub-exponential distributions the asymptotic behavior is different. Indeed, in Equation (\ref{eqsub2}), the terms $\bar{F}_{X_i}(u)$ and $\bar{F}_{X_j}(u)$  lead to take into account the group effect. In the Pareto distributions case as example, recall that Proposition \ref{GPDO} gives that the asymptotic behavior of the optimal allocation for $I$ is described by :
$$\left(\frac{\underset{u\to\infty}\lim{\alpha_i}}{b_i}\right)^{-a}-\left(\frac{\underset{u\to\infty}\lim{\alpha_j}}{b_j}\right)^{-a}=\left(\frac{1}{b_i}\right)^{-a}-\left(\frac{1}{b_j}\right)^{-a}\/,$$
whereas the optimal allocation for $\mathit{I_{loc}}$ for independent Pareto distributions is given by $\alpha_i=\frac{b_i }{\sum_{\ell=1}^{d}b_\ell}$, for all $i\in\{1,\ldots,d\}$.\\
    
For optimal allocation by minimizing the $J$ indicator, the asymptotic behavior is identical for the two families of distributions, the riskiest branch is considered as first responsible of the overall ruin, and thus, the optimal solution is to allocate the entire capital $u$ to this business line.
    
\section{The impact of the dependence structure}
In this section, we focus on the impact of the dependence structure on the optimal allocation. We study at first the impact of mixture exponential-gamma to construct a correlated Pareto  distributions, compared to the independence case presented in the previous section. Then we analyze the optimal allocation in the case of comonotonic risks. The last sub-section is devoted to the study of the impact of the dependence nature on the optimal allocation, using some bivariate models with copulas.
\subsection{Correlated Pareto}~~\\
Let $(X_1,\ldots,X_d)$ be a mixture of exponential distributions such that for all $i\in \{1,2,\ldots,d\}$, 
 $ X_i\sim\mathcal{E}(\beta_i\theta)$, with ($\beta_1<\beta_2\cdots<\beta_d$), and
$\theta\sim \Gamma(a,b)$. Therefore, $X_i$ have survival functions of the form:
\[ \bar{F}_{X_i}(x)=\int_{0}^{\infty}\bar{F}_{X_i|\Theta=\theta}f_\Theta(\theta)d\theta=\int_{0}^{\infty}e^{-\beta_i\theta x}f_\Theta(\theta)d\theta =\left(1+\frac{\beta_ix}{b}\right)^{-a}\/, \]
consequently, $X_i$ have Pareto distribution of parameters $\left(a,\frac{b}{\beta_i}\right)$. They are conditionally independents. So, the idea is conditioning on the random variable $\theta $ and then integrate the formulas found for the case of independent exponential distributions. This model has been studied in e.g.\cite{oakes1989bivariate, Yeh2007}. Recall $ A_\ell=\displaystyle\prod_{j=1,j\neq \ell}^{d}{\dfrac{\beta_j}{\beta_j-\beta_\ell}}$, $\ell=1,\ldots,d$.   
  \begin{Proposition}[The optimal allocation for the indicator $I$]
  The optimal allocation minimizing the multivariate risk indicator $I$ is the unique solution in $\mathcal{U}^d_u$, of the following equations system:
   \[ 
      \forall (i,j)\in\{{{1,2,\ldots,d}}\}^{2}\/, \]
      \begin{equation}
      s(\beta_i\alpha_i)-s(\beta_j\alpha_j)
        -\sum_{\ell=1}^{d}{A_\ell [s(\alpha_i\beta_i+(1-\alpha_i)\beta_\ell) )-s(\alpha_j\beta_j+(1-\alpha_j)\beta_\ell) )]}=0
        \/,\label{eq10}
      \end{equation}
      where $s$ is the function defined by $s(x)=(1+x\frac{u}{b})^{-a}$ and $\alpha_i=\frac{u_i}{u}$ for all $i\in\{1,\ldots,d\}$. 
     \end{Proposition}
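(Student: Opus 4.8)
The plan is to reduce everything to the independent exponential case already treated in Proposition~\ref{prp21} by conditioning on the mixing variable $\theta$. Indeed, given $\theta$, the risks $X_1,\ldots,X_d$ are independent exponentials with parameters $\beta_1\theta,\ldots,\beta_d\theta$, so the exact conditional law of $\bigl(X_i>u_i,\,S\leq u\bigr)$ is obtained from Proposition~\ref{prp21} simply by replacing each $\beta_k$ with $\beta_k\theta$; it then remains to integrate against the $\Gamma(a,b)$ density of $\theta$. The key structural observation that makes this clean is the \emph{scale-invariance} of the constants $A_\ell$: since $A_\ell=\prod_{j\neq\ell}\frac{\beta_j}{\beta_j-\beta_\ell}$, substituting $\beta_j\mapsto\beta_j\theta$ leaves each ratio, hence $A_\ell$, unchanged because the factor $\theta$ cancels in numerator and denominator. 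Thus only the exponential rates get rescaled by $\theta$, while the weights $A_\ell$ stay fixed.

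Applying Proposition~\ref{prp21} conditionally on $\theta$ gives, with $h(x)=\exp(-ux)$,
\[
\mathbb{P}\left(X_i>u_i,\,S\leq u\mid\theta\right)=h(\beta_i\theta\alpha_i)-\sum_{\ell=1}^d A_\ell\,h(\beta_\ell\theta)\,h\bigl(\alpha_i\theta(\beta_i-\beta_\ell)\bigr).
\]
Collecting the exponents in the $\ell$-th summand turns $h(\beta_\ell\theta)\,h(\alpha_i\theta(\beta_i-\beta_\ell))$ into $\exp\bigl(-u\theta[\alpha_i\beta_i+(1-\alpha_i)\beta_\ell]\bigr)$, so every term is of the form $\exp(-u\theta c)$ for an explicit positive constant $c$ depending on $\alpha_i,\beta_i,\beta_\ell$. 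Integrating over $\theta\sim\Gamma(a,b)$ via the tower property and the Laplace-transform identity already used for the marginal survival functions, namely $\int_0^\infty e^{-u\theta c}f_\Theta(\theta)\,d\theta=(1+uc/b)^{-a}=s(c)$ with $s(x)=(1+xu/b)^{-a}$, each term collapses to a value of $s$. By linearity of the integral over the finite sum this yields
\[
\mathbb{P}\left(X_i>u_i,\,S\leq u\right)=s(\beta_i\alpha_i)-\sum_{\ell=1}^d A_\ell\,s\bigl(\alpha_i\beta_i+(1-\alpha_i)\beta_\ell\bigr).
\]

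Finally I would substitute this expression into the optimality condition~(\ref{ZoneO}), which equates the above probability for indices $i$ and $j$; after moving everything to one side this is precisely the system~(\ref{eq10}). Uniqueness of the solution in $\mathcal{U}^d_u$ is inherited from the strict convexity of $I$ established earlier, since the $X_i$ are continuous and positive with joint density of full support, so no separate uniqueness argument is required. The computations are otherwise routine; the only point genuinely worth isolating is the scale-invariance of the $A_\ell$, which is what allows the conditional formula to integrate term-by-term into a single $s$-transform rather than a more complicated mixture expression.
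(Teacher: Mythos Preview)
Your proof is correct and follows exactly the approach the paper indicates: condition on $\theta$, invoke the conditional-exponential computation from the proof of Proposition~\ref{prp21}, and integrate against the $\Gamma(a,b)$ density of $\theta$. Your explicit remarks on the scale-invariance of the $A_\ell$ and the Laplace-transform identity $\int_0^\infty e^{-uc\theta}f_\Theta(\theta)\,d\theta=(1+uc/b)^{-a}$ simply fill in the details that the paper leaves implicit in its one-line proof.
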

   \begin{proof}
   It suffices to integrate Equations System (\ref{EIZO}), multiplied by the density function of $\theta$.
  \end{proof}
 \begin{Proposition}[The asymptotic optimal allocation for the indicator $I$]\label{prop4.2}  When the capital $u$ goes to infinity, the optimal allocation by minimization of the risk indicator $I$ is the unique solution in $\mathcal{U}^d_u$ of the following equations system:
 \[ \forall (i,j)\in\{{{1,2,\ldots,d}}\}^{2}\/, \]
 \begin{equation}
                                 (\beta_i\alpha_i)^{-a}-(\beta_j\alpha_j)^{-a}-\sum_{\ell=1}^{d}{A_\ell [(\alpha_i\beta_i+(1-\alpha_i)\beta_\ell)^{-a}- (\alpha_j\beta_j+(1-\alpha_j)\beta_\ell)^{-a} ]}=0
         \/.\label{eq11}
       \end{equation}
 \end{Proposition}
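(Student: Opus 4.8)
The plan is to obtain (\ref{eq11}) by letting $u\to\infty$ in the finite-capital optimality system (\ref{eq10}), after a rescaling that tames the vanishing factor in $s$. The starting observation is the exact identity
\[
\left(\tfrac{u}{b}\right)^{a}s(x)=\left(x+\tfrac{b}{u}\right)^{-a},\qquad x>0,
\]
which follows at once from $s(x)=(1+x\tfrac{u}{b})^{-a}$. Multiplying (\ref{eq10}) by $(u/b)^a$ therefore produces an equivalent system in which each $s(\cdot)$ is replaced by $(\cdot+b/u)^{-a}$; since $b/u\to0$, every such term converges to $(\cdot)^{-a}$ \emph{provided its argument stays bounded away from $0$}. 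The whole proof thus reduces to controlling the arguments along the sequence of optimal allocations.

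The convex-combination arguments $\alpha_i\beta_i+(1-\alpha_i)\beta_\ell$ are automatically harmless: for every $\alpha_i\in[0,1]$ they lie in $[\beta_1,\beta_d]$, hence stay bounded away from $0$ uniformly in $u$, and the corresponding terms remain in a compact subset of $(0,\infty)$. The only arguments that can degenerate are the marginal ones $\beta_i\alpha_i$, which vanish if $\alpha_i\to0$. I would rule this out by a contradiction internal to the system. Pass to a subsequence along which $\alpha_i\to\alpha_i^{\ast}$ for every $i$ (compactness of the simplex); since $\sum_i\alpha_i^{\ast}=1$, some index $j$ has $\alpha_j^{\ast}>0$. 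If $\alpha_i^{\ast}=0$ for some $i$, then in the rescaled equation for the pair $(i,j)$ the term $(\beta_i\alpha_i+b/u)^{-a}\to+\infty$ while $(\beta_j\alpha_j+b/u)^{-a}$ and all convex-combination terms stay bounded, so the left-hand side diverges, contradicting that it equals $0$. Hence $\alpha_i^{\ast}>0$ for all $i$, and consequently $\alpha_i^{\ast}<1$ (positive numbers summing to $1$, with $d\geq2$). I expect this non-degeneration to be the main obstacle: one cannot invoke Theorem~\ref{th2.6} here, because the correlated-Pareto vector is not independent, so the bounds must be extracted directly from (\ref{eq10}).

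With all limiting arguments strictly positive, the termwise convergence $(x+b/u)^{-a}\to x^{-a}$ applies to every summand, and the rescaled (\ref{eq10}) passes to the limit to give precisely (\ref{eq11}) for $(\alpha_1^{\ast},\ldots,\alpha_d^{\ast})$. It then remains to see that this limit is characterised uniquely, so that the full family converges. Setting
\[
\phi_i(\alpha)=(\beta_i\alpha)^{-a}-\sum_{\ell=1}^{d}A_\ell\bigl(\alpha\beta_i+(1-\alpha)\beta_\ell\bigr)^{-a},
\]
system (\ref{eq11}) reads $\phi_i(\alpha_i)=\phi_j(\alpha_j)$ for all $i,j$. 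Here $\phi_i$ is the $u\to\infty$ limit of the scaled tail $(u/b)^{a}\,\mathbb{P}(X_i>\alpha u,\,S\leq u)$, which is strictly decreasing in $\alpha$; moreover $\phi_i(\alpha)\to+\infty$ as $\alpha\to0^{+}$ while $\phi_i(1)=0$ (using $\sum_\ell A_\ell=1$). Thus each $\alpha_i$ is recovered as $\phi_i^{-1}(c)$ from the common value $c$, and the constraint $\sum_i\phi_i^{-1}(c)=1$ pins down $c$ by monotonicity; every subsequential limit then coincides, yielding convergence of the whole family to the unique solution of (\ref{eq11}). The one slightly delicate point I would check carefully is the strict monotonicity of $\phi_i$, since the coefficients $A_\ell$ alternate in sign — the probabilistic reading of $\phi_i$ as a scaled tail is the cleanest way to secure it.
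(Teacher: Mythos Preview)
Your approach is correct and coincides with the paper's: the paper's proof is the single line ``divide (\ref{eq10}) by $s(1)$ and let $u\to\infty$,'' which is the same rescaling as your multiplication by $(u/b)^a$, since $s(1)=(1+u/b)^{-a}\sim(b/u)^a$. You in fact go well beyond the paper by ruling out the degeneracy $\alpha_i\to0$ and by arguing uniqueness of the limiting system---the paper addresses neither point, so your version is the more complete argument.
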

  \begin{proof}
  We divide Equations System (\ref{eq10}) by s(1), and let $u$ go to $+\infty$ to get Equations System~(\ref{eq11}).
  \end{proof}
  Proposition \ref{prop4.2} shows the impact of the dependence related to the mixture. Indeed, in the case of independent Pareto distributions, of parameters $\left(a,\frac{b}{\beta_i}\right)_{i=1,\ldots,d}$, the asymptotic optimal allocation for the indicator $I$ is given by Proposition \ref{prop3.12} as the solution of the equations system:\[ \forall (i,j)\in\{{{1,2,\ldots,d}}\}^2,~~~~\left(\beta_i\alpha_i\right)^{-a}-\left(\beta_j\alpha_j\right)^{-a}=\left(\beta_i\right)^{-a}-\left(\beta_j\right)^{-a} \/.\]
  Each equation in this system depends only on two risks, unlike the mixture case, where the equations of Equations System (\ref{eq11}), depend on all the risks.
  
  \begin{Proposition}[The optimal allocation for the indicator $J$] 
  The optimal allocation minimizing the multivariate risk indicator $J$ is the unique solution in $\mathcal{U}^d_u$, of the following equations system: 
  \begin{equation}
       \forall (i,j)\in\{{{1,2,\ldots,d}}\}^{2},
          \sum_{\ell=1}^{d}{A_\ell [s(\alpha_i\beta_i+(1-\alpha_i)\beta_\ell) )-s(\alpha_j\beta_j+(1-\alpha_j)\beta_\ell) )]}=0
            \/.\label{ZvDPGCorr}
          \end{equation}
   \end{Proposition}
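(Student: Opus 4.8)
The plan is to mirror the proof of the indicator-$I$ statement (the proposition giving Equation~(\ref{eq10})): I would condition on the mixing variable $\theta$, reuse the independent-exponential computation at the conditional level, and then integrate against the $\Gamma(a,b)$ density. The starting point is the optimality condition~(\ref{ZoneV}), which for the indicator $J$ reads $\mathbb{P}(X_i>u_i,S\geq u)=\mathbb{P}(X_j>u_j,S\geq u)$ for all $(i,j)$. Since the $X_k$ are conditionally independent given $\theta$, with $X_k\mid\theta\sim\mathcal{E}(\beta_k\theta)$, the entire computation of the proof of Proposition~\ref{prp21} applies verbatim to the conditional law, every rate $\beta_m$ being replaced by $\beta_m\theta$. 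Writing $\mathbb{P}(X_i>u_i,S\geq u\mid\theta)=\bar{F}_{X_i\mid\theta}(u_i)-\mathbb{P}(X_i>u_i,S\leq u\mid\theta)$ and using (the conditional analogue of) the identity behind~(\ref{EIZV}), I obtain
\[
\mathbb{P}(X_i>u_i,S\geq u\mid\theta)=\sum_{\ell=1}^{d}A_\ell^{(\theta)}\,e^{-\theta\left[\alpha_i\beta_i+(1-\alpha_i)\beta_\ell\right]u},
\]
where $\alpha_i=u_i/u$ and $A_\ell^{(\theta)}=\prod_{j\neq\ell}\frac{\beta_j\theta}{\beta_j\theta-\beta_\ell\theta}$.

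The key observation that makes the mixture collapse nicely is that $A_\ell^{(\theta)}=A_\ell$ for every $\theta>0$: the common factor $\theta$ cancels in each ratio $\frac{\beta_j\theta}{\beta_j\theta-\beta_\ell\theta}$, so the Erlang-type coefficients are invariant under a simultaneous rescaling of all the rate parameters. Thus the only $\theta$-dependence remaining sits in the exponentials, and I would integrate against $f_\Theta$ (justifying the interchange of the finite sum and the integral by Fubini):
\[
\mathbb{P}(X_i>u_i,S\geq u)=\sum_{\ell=1}^{d}A_\ell\int_0^{\infty}e^{-\theta\left[\alpha_i\beta_i+(1-\alpha_i)\beta_\ell\right]u}f_\Theta(\theta)\,d\theta.
\]
Each integral is the Laplace transform of the $\Gamma(a,b)$ law, which is exactly the transform already computed when deriving the Pareto survival functions, namely $\int_0^\infty e^{-\theta t}f_\Theta(\theta)\,d\theta=(1+t/b)^{-a}$. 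Evaluated at $t=\left[\alpha_i\beta_i+(1-\alpha_i)\beta_\ell\right]u$ this is precisely $s\!\left(\alpha_i\beta_i+(1-\alpha_i)\beta_\ell\right)$, since $s(x)=(1+xu/b)^{-a}$. Hence $\mathbb{P}(X_i>u_i,S\geq u)=\sum_{\ell=1}^{d}A_\ell\,s(\alpha_i\beta_i+(1-\alpha_i)\beta_\ell)$, and equating the expressions for $i$ and $j$ yields exactly~(\ref{ZvDPGCorr}). Existence and uniqueness of the solution in $\mathcal{U}^d_u$ are inherited from the strict convexity of $J$ under the standing assumptions recalled in Section~1, so the optimal allocation is indeed characterized by that system.

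I do not expect a genuine obstacle here, as the argument is a conditional reduction to the independent-exponential case already established. The two points worth verifying carefully are the scale-invariance $A_\ell^{(\theta)}=A_\ell$, which is what guarantees that integrating the conditional formula term by term reproduces the clean $s$-function form rather than a more awkward mixture, and the legitimacy of exchanging the (finite) summation with the integration in $\theta$, which follows from finiteness of the integrals. The computation is otherwise identical in structure to the indicator-$I$ proof, the only change being the absence of the subtracted $\bar{F}_{X_i}(u)$ term, reflecting the passage from $\{S\leq u\}$ to $\{S\geq u\}$.
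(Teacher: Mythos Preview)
Your proposal is correct and follows essentially the same approach as the paper: the paper's one-line proof simply says to integrate Equations System~(\ref{EIZV}) against the density of $\theta$, and your write-up spells out explicitly why this works (the scale-invariance $A_\ell^{(\theta)}=A_\ell$ and the identification of the $\Gamma(a,b)$ Laplace transform with the function $s$). The only difference is the level of detail, not the underlying argument.
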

   \begin{proof}
    It suffices to integrate Equations System (\ref{EIZV}), multiplied by the density function of $\theta$.
  \end{proof}
   \begin{Proposition}[The asymptotic optimal allocation for the indicator $J$] \label{prop4.4}When the capital $u$ goes to infinity, the optimal allocation by minimization of the risk indicator $J$, is the unique solution in $\mathcal{U}^d_u$ of the following equations system:
    \begin{equation}
          \forall (i,j)\in\{{{1,2,\ldots,d}}\}^{2},~~
                       \sum_{\ell=1}^{d}{A_\ell [(\alpha_i\beta_i+(1-\alpha_i)\beta_\ell)^{-a}- (\alpha_j\beta_j+(1-\alpha_j)\beta_\ell)^{-a} ]}=0
                       \label{asy10}\/.
             \end{equation}
   \end{Proposition}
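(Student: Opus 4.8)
The plan is to mirror the one-line argument used for Proposition~\ref{prop4.2}, but to make explicit the two points it glosses over. Starting from the exact finite-$u$ characterization~(\ref{ZvDPGCorr}) of the minimizer of $J$, I would divide every equation by $s(1)=(1+\tfrac{u}{b})^{-a}$; this leaves the system unchanged and reduces everything to studying quotients $s(y)/s(1)$ with $y=\alpha_i\beta_i+(1-\alpha_i)\beta_\ell$.

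The key computation is the asymptotics of such a quotient. Writing
$$\frac{s(y)}{s(1)}=\left(\frac{1+y\tfrac{u}{b}}{1+\tfrac{u}{b}}\right)^{-a}=\left(\frac{\tfrac{b}{u}+y}{\tfrac{b}{u}+1}\right)^{-a},$$
one sees that $s(y)/s(1)\to y^{-a}$ as $u\to\infty$ provided $y$ stays bounded away from $0$. Here this is automatic: each argument $\alpha_i\beta_i+(1-\alpha_i)\beta_\ell$ is a convex combination of $\beta_i$ and $\beta_\ell$, hence lies in $[\min_\ell\beta_\ell,\max_\ell\beta_\ell]\subset(0,\infty)$ uniformly in $u$. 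Since the $\alpha_i$ depend on $u$, I cannot pass to the limit directly; instead I would use the convergent-subsequence convention adopted throughout: the normalized allocation $(\alpha_1,\ldots,\alpha_d)$ lives in the compact unit simplex, so along a subsequence $\alpha_i\to\alpha_i^\infty$ and each argument converges to $\alpha_i^\infty\beta_i+(1-\alpha_i^\infty)\beta_\ell>0$. As the $A_\ell$ are fixed constants and the sums are finite, the normalized system~(\ref{ZvDPGCorr}) then converges term by term to exactly~(\ref{asy10}).

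The real obstacle is the uniqueness claim, which is what upgrades subsequential convergence to convergence of the whole family. For each finite $u$ the minimizer is unique by the strict convexity of $J$ recalled in Section~1, so $(\alpha_1,\ldots,\alpha_d)$ is well defined; every subsequential limit satisfies~(\ref{asy10}), so it remains to show that~(\ref{asy10}) has a unique solution in the simplex. I expect this to be the delicate step, since---unlike the independent-Pareto case of Proposition~\ref{prop3.13}, where monotonicity forced the degenerate solution $\alpha_1=1$---here the limiting solution is genuinely interior. I would try to obtain uniqueness by exhibiting~(\ref{asy10}) as the Lagrange optimality condition of a strictly convex limiting functional, so that the general existence-and-uniqueness framework of Section~1 applies to the limit as well; the stochastic ordering $X_d\preccurlyeq_{st}\cdots\preccurlyeq_{st}X_1$ together with the monotonicity of the allocation then at least locate the solution through $\alpha_1^\infty\geq\cdots\geq\alpha_d^\infty$. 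Once uniqueness is secured, all subsequential limits coincide and the full family $(\alpha_1,\ldots,\alpha_d)$ converges to that unique solution.
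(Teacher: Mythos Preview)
Your approach is essentially the paper's: divide (\ref{ZvDPGCorr}) by $s(1)$ and let $u\to\infty$. The paper's own proof is literally that one sentence, with no further justification; in particular it does not address the points you flag (term-by-term convergence via the uniform lower bound on the arguments, passage through a convergent subsequence, or uniqueness of the limiting system). So your argument follows the same route but is strictly more detailed than what the paper provides, and the uniqueness step you worry about is simply asserted rather than proved in the paper.
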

   \begin{proof}
    We divide Equations system (\ref{ZvDPGCorr}) by s(1), and we let $u$ fo to $+\infty$ to get Equations system~(\ref{asy10}).
   \end{proof}  
   Proposition \ref{prop4.4} shows that for the indicator $J$, the asymptotic behaviour of the optimal capital allocation takes into account the mixture effect. In fact, for independent Pareto distributions, we have proved in Proposition \ref{prop3.13}, that we allocate the entire capital $u$ to the riskiest branch $ X_1 $, while the asymptotic optimal allocation in the correlated Pareto distributions case is the solution of Equations system (\ref{asy10}).
\subsection{Comonotonic risks}~~\\
The concept of comonotonic random variables is related to the studies of Hoeffding (1940) \cite{Hoeff1940} and Fréchet (1951) \cite{Frechet1951}. Here we use the definition of comonotonic risks as it was first mentioned in the actuarial literature in Borch (1962) \cite{Borch1962}.\\
A vector of random variables $(X_1,X_2,\ldots,X_n)$ is comonotonic if and only if there exists a random variable $Y$ and non-decreasing functions $\varphi_1,\ldots,\varphi_n$ such that: 
\[ (X_1,\ldots,X_n)\overset{d}{=}(\varphi_1(Y),\ldots,\varphi_n(Y)) \/.\]
In the case where the risks $X_1$,\ldots,$X_d$ are comonotonic, we may give explicit formulas for the optimal allocation minimizing the multivariate risk indicators $I$ and $J$, and for some risk models. For that, we use the existence of a uniform random variable $U$ such that $X_i=F^{-1}_{X_i}(U)$ for all $i\in\{1,\ldots,d\}$, and $S=\sum_{i=1}^{d}F^{-1}_{X_i}(U)=\varphi(U)$, where $\varphi(t) = \sum_{i=1}^{d}F^{-1}_{X_i}(t)$, $\varphi$ is a non-decreasing function.\\
The main result of this section is given below. 
\begin{Proposition}\label{prop:comonotone}
Let $X_1$,\ldots,$X_d$ be comonotonic  risks, with increasing distribution functions and support containing $[0\/,u]$. The optimal allocations for indicators $I$, $J$ and $I_{loc}$ coincide, they are given by $(u_1\/,\ldots \/, u_d)\in\mathcal{U}_u^d$ and 
$$F_{X_i}(u_i) = F_{X_j}(u_j) \ \forall \ i\/, j = 1\/, \ldots \/, d \/.$$ 
\end{Proposition}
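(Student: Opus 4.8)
The plan is to exploit the key structural fact stated before the proposition: when $X_1,\ldots,X_d$ are comonotonic, there is a single uniform random variable $U$ with $X_i = F_{X_i}^{-1}(U)$ and $S = \varphi(U)$, where $\varphi(t) = \sum_{i=1}^d F_{X_i}^{-1}(t)$ is non-decreasing. This reduces every event appearing in the optimality conditions to an event defined in terms of the single variable $U$, which is the mechanism that should collapse the three indicators onto the same solution.

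\medskip

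\textbf{Step 1: Reduce the events to conditions on $U$.} First I would rewrite the event $\{X_i > u_i\}$ as $\{U > F_{X_i}(u_i)\}$, using that $F_{X_i}$ is increasing, and rewrite $\{S \leq u\}$ as $\{\varphi(U) \leq u\}$, i.e. $\{U \leq \varphi^{-1}(u)\}$ (with the appropriate convention for the generalized inverse at a possible flat piece of $\varphi$). The crucial observation is that both the local event $\{X_i > u_i\}$ and the aggregate events $\{S \leq u\}$ or $\{S \geq u\}$ are now nested intervals of the single variable $U$, so their joint probabilities become lengths of intervals that I can compute directly.

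\medskip

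\textbf{Step 2: Handle $I_{loc}$ and show it forces $F_{X_i}(u_i) = F_{X_j}(u_j)$.} For $I_{loc}$ the optimality condition recalled earlier is $\mathbb{P}(X_i > u_i) = \mathbb{P}(X_j > u_j)$, which under Step 1 reads $1 - F_{X_i}(u_i) = 1 - F_{X_j}(u_j)$, i.e. exactly $F_{X_i}(u_i) = F_{X_j}(u_j)$. This identifies the candidate solution immediately. I would then verify this candidate lies in $\mathcal{U}_u^d$: since each $X_i$ has support containing $[0,u]$ and the $F_{X_i}$ are increasing, a common value $c = F_{X_i}(u_i)$ determines each $u_i = F_{X_i}^{-1}(c)$, and the constraint $\sum u_i = u$ fixes $c$ uniquely via $\varphi(c) = \sum_i F_{X_i}^{-1}(c) = u$, i.e. $c = \varphi^{-1}(u)$.

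\medskip

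\textbf{Step 3: Verify the same condition solves $I$ and $J$.} Now I substitute $F_{X_i}(u_i) = F_{X_j}(u_j) = c = \varphi^{-1}(u)$ into the optimality conditions \eqref{ZoneO} for $I$ and \eqref{ZoneV} for $J$. Using Step 1, the joint event $\{X_i > u_i,\, S \leq u\}$ becomes $\{U > c,\, U \leq \varphi^{-1}(u)\} = \{U > c,\, U \leq c\}$, which is empty (or a null set); symmetrically for $J$ the event $\{X_i > u_i,\, S \geq u\} = \{U > c\}$. In either case the joint probability is the same for every $i$ — it is $0$ for $I$ and $1 - c$ for $J$, independent of $i$ — so conditions \eqref{ZoneO} and \eqref{ZoneV} are both satisfied by the common choice $F_{X_i}(u_i) = c$. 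The main obstacle to watch is the boundary alignment: the aggregate threshold $u = \varphi(c)$ coincides exactly with the value of $S$ attained at $U = c$, so one must be careful that the set $\{X_i > u_i,\, S \leq u\}$ really is negligible and that there is no contribution from the atom where $\varphi$ is flat. I would treat this by noting that $U$ is continuous and that the relevant events differ only on a set of measure zero, so the probabilities are unaffected.

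\medskip

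\textbf{Step 4: Conclude by uniqueness.} Finally, since the indicators are strictly convex on $\mathcal{U}_u^d$ (under the standing assumptions recalled in Section 1), each has a unique minimizer characterized by its optimality condition. Having exhibited a single allocation $(u_1,\ldots,u_d)$ with $F_{X_i}(u_i)=F_{X_j}(u_j)$ that simultaneously satisfies the optimality conditions for $I$, $J$ and $I_{loc}$, uniqueness forces this to be \emph{the} optimal allocation in all three cases, so the three allocations coincide. I expect Step 3 — specifically the careful measure-theoretic bookkeeping at the common threshold $c = \varphi^{-1}(u)$ — to be the only delicate point; everything else is a direct translation through the comonotonic representation.
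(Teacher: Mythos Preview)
Your reduction to the single uniform variable $U$ and the verification that the candidate $u_i = F_{X_i}^{-1}(v)$ with $v=\varphi^{-1}(u)$ satisfies all three optimality conditions is correct, and this is essentially the mechanism the paper uses. The gap is in Step~4. You invoke strict convexity of $I$ and $J$ ``under the standing assumptions recalled in Section~1'', but that strict-convexity result requires the joint density $f_{(X_k,S)}$ to have support containing $[0,u]^2$. In the comonotonic case $(X_k,S)=(F_{X_k}^{-1}(U),\varphi(U))$ is concentrated on a one-dimensional curve in $\mathbb{R}^2$, so no such joint density exists and the Section~1 hypothesis fails. Exhibiting \emph{a} critical point therefore does not, by itself, yield uniqueness of the minimizer for $I$ or $J$.

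The paper closes this differently: instead of verifying a candidate and appealing to strict convexity, it shows directly that the Lagrange conditions, restricted to $\mathcal{U}_u^d$, force $w_i=w_j=v$ for all $i,j$ (where $w_i=F_{X_i}(u_i)$). The simplex constraint $\sum_k F_{X_k}^{-1}(w_k)=u=\varphi(v)$, combined with strict monotonicity of the $F_{X_k}^{-1}$, rules out having all $w_k$ strictly on one side of $v$; the equalities $\mathbb{P}(w_i\leq U\leq v)=\mathbb{P}(w_j\leq U\leq v)$ (for $I$) and $\max(w_i,v)=\max(w_j,v)$ (for $J$) then leave $w_i=v$ as the only possibility on the simplex. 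Plain convexity of $I$ and $J$ --- which does hold, since $u_k\mapsto (X_k-u_k)^+$ is convex --- turns this unique critical point into the unique minimizer. Your argument is easily repaired along these lines; alternatively, for $I$ you can bypass first-order conditions entirely by noting that the candidate achieves $I=0$, and that any other point of $\mathcal{U}_u^d$ has some $w_k<v$ and hence $I>0$.
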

\begin{proof}
Let us denote: $w_i=F_{X_i}(u_i)$, $v=\varphi^{-1}(u)$, $M_i=\max(w_i\/,v)$. The indicators $I$ and $J$ may be rewritten for $(u_1\/,\ldots \/, u_d)\in\mathcal{U}_u^d$:
\begin{eqnarray*}
I(u_1\/,\ldots\/, u_d) &=& \sum_{i=1}^d \mathbb{E}\left((F_{X_i}^{-1}(U) -u_i){\bf 1}_{\{U\geq w_i\/, \ U\leq v\}}\right)\\
J(u_1\/,\ldots\/,u_d) &=&\sum_{i=1}^d \mathbb{E}\left((F_{X_i}^{-1}(U) -u_i){\bf 1}_{\{U\geq M_i\}} \right)\/.
\end{eqnarray*}
We remark that since $(u_1\/,\ldots \/, u_d)\in\mathcal{U}_u^d$, and $F_{X_i}$ is strictly increasing for all $i\in\{1,\ldots,d\}$, we cannot have that $w_i<v$ for all $i$, so that, $I$ is not trivially equal to $0$. We use Lagrange multiplier to get that the minimum of $I$ and $J$ are reached in $\mathcal{U}_u^d$ respectively for:
\begin{itemize}
\item $\mathbb{P}(U\geq w_i\/, \ U\leq v) = \mathbb{P}(U\geq w_j\/, \ U\leq v)$, for $i\/,j =1\/,\ldots\/, d$,
\item $\mathbb{P}(U\geq M_i) = \mathbb{P}(U\geq M_j)$, for $i\/,j =1\/,\ldots\/, d$.
\end{itemize} 
These equality are acheaved if and only if $w_i=w_j=v$ or in other words if $F_{X_i}(u_i)=F_{X_j}(u_j)$. We remark that the minimum of $I$ is then $0$. 
\end{proof}
The following three corollaries are direct applications of Proposition \ref{prop:comonotone} to some particular cases.
\begin{Example}[Comonotonic exponential model]
For comonotonic risks of exponential distributions $X_i\sim \exp(\beta_i)$, the optimal allocation by minimization of the two risk indicators is: \[ \forall i\in \{1,\ldots,d\},~~u_i=\frac{1/\beta_i}{\sum_{j=1}^{d}1/\beta_j}u \/.\]    
\end{Example}
It is noticeable that in this particular case, the optimal allocation for comonotonic risks coincide with the asymptotic allocation in the independence case (see Proposition \ref{pro3.2}).
 \begin{Example}[Comonotonic log-normal model]
For comonotonic risks of log-normal distributions $X_i\sim LN(\mu_i,\sigma)$, the optimal allocation by minimization of the two risk indicators is: 
\[  \forall i\in \{1,\ldots,d\},~~u_i=\frac{\exp(\mu_i)}{\sum_{\ell=1}^{d}\exp(\mu_\ell)}u \/. \]    
 \end{Example}
 \begin{Example}[Comonotonic Pareto model]
 For comonotonic risks of Pareto distributions of the same shape parameter $\alpha$: $X_i\sim Pa(\alpha, \lambda_i)$, the optimal allocation by minimization of the two risk indicators is: \[ \forall i\in \{1,\ldots,d\},~~u_i=\frac{\lambda_i}{\sum_{\ell=1}^{d}\lambda_\ell}u \/. \]      
  \end{Example}
  Contrary to the exponential case, this result does not coincide with the asymptotic behavior obtained for  independent Pareto distributions (see Proposition \ref{prop3.12}).
  
\subsection{The dependence impact with some copulas models}~~\\
~~ 
In this section, we study the impact of the dependence on the optimal capital allocation using some copulas (see Nelsen \cite{nelsen} for review on copulas). The idea is to find the optimal allocation as a function of the copula parameters in each case. We focus on the indicator $I$, the same kind of calculations can be done for the indicator $J$.
\subsubsection{FGM Bivariate Model}~~\\
Let $X_1$ and $X_2$ be two risks of marginal exponential distributions $X_i\sim \exp(\beta_i)$ and FGM bivariate dependence structure with $-1\leq\theta\leq 1$ as parameter (see Nelsen \cite{nelsen},Example 3.12., section 3.2.5). We assume that $\beta_1<\beta_2/2$.\\
In this case, the copula Pearson correlation coefficient is given by $\rho_P=\frac{\theta}{4}$, and the bivariate distribution function is:
\[ F_{X_1,X_2}(x_1,x_2)=(1-e^{-\beta_1x_1})(1-e^{-\beta_2x_2})+\theta(1-e^{-\beta_1x_1})(1-e^{-\beta_2x_2})e^{-\beta_1x_1}e^{-\beta_2x_2}\/. \]
\begin{Proposition}[The optimal capital allocation for the indicator $I$ in the FGM Model]
\label{efgm}
For the indicator $I$, the optimal allocation of a capital $u$ is given by $(\beta u,(1-\beta)u)$
such that $\beta=u_1/u$ is the unique solution in $[0,1]$ of the following equation:
\begin{align}\label{didier}
(1+2\theta)(h(\beta)-h(\alpha-\alpha\beta))&+2\theta(h(2\beta)-h(2\alpha-2\alpha\beta))\\\nonumber+(1+\theta)h(\alpha+\beta-\alpha\beta)+\theta h(2\alpha+2\beta-2\alpha\beta)-&\theta h(\alpha+2\beta-\alpha\beta)-\theta h(2\alpha+\beta-2\alpha\beta)\\\nonumber=\frac{1+\theta}{\alpha-1}(h(\alpha)+\alpha h(1))+\frac{\theta}{\alpha-1}(h(2\alpha)+\alpha h(2))-&\frac{\theta}{\alpha-2}(2h(\alpha)+\alpha h(2))-\frac{\theta}{2\alpha-1}(h(2\alpha)+2\alpha h(1))\/,
\end{align}
where, $h$ is the function $h(x)=\exp(-\beta_1ux)$,and  $\alpha=\beta_2/\beta_1$.
\end{Proposition}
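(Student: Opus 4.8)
The plan is to specialize the optimality condition \eqref{ZoneO} to the bivariate case and then to compute the two resulting probabilities explicitly from the FGM joint density. For $d=2$ we have $S=X_1+X_2$ and $u=u_1+u_2$, so \eqref{ZoneO} reads
\[
\mathbb{P}(X_1>u_1,\,X_1+X_2\leq u)=\mathbb{P}(X_2>u_2,\,X_1+X_2\leq u).
\]
Differentiating the stated distribution function twice, the FGM joint density is
\[
f_{X_1,X_2}(x_1,x_2)=\beta_1\beta_2\,e^{-\beta_1x_1-\beta_2x_2}\bigl[1+\theta(2e^{-\beta_1x_1}-1)(2e^{-\beta_2x_2}-1)\bigr],
\]
which for $x_1,x_2>0$ is strictly positive when $|\theta|\leq 1$ (the product of the two factors lies in $(-1,1)$). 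In particular its support contains $[0,u]^2$, so by the strict convexity of $I$ recalled in Section~1 the minimizer in $\mathcal{U}^2_u$ is unique; this will yield the uniqueness of $\beta=u_1/u$ once the defining equation is produced.

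Next I would write each side as an integral over a triangle,
\[
P_1=\int_{u_1}^{u}\!\!\int_{0}^{u-x_1} f_{X_1,X_2}(x_1,x_2)\,dx_2\,dx_1,\qquad
P_2=\int_{u_2}^{u}\!\!\int_{0}^{u-x_2} f_{X_1,X_2}(x_1,x_2)\,dx_1\,dx_2,
\]
and exploit the fact that the density is \emph{affine} in $\theta$, so that $P_1-P_2$ is affine in $\theta$ as well. Expanding the FGM bracket turns each integrand into a finite sum of pure exponentials $e^{-a\beta_1x_1-b\beta_2x_2}$ with $(a,b)\in\{1,2\}^2$, so every inner and outer integration is elementary and produces only exponential terms, evaluated either at the allocation boundaries ($x_1=u_1$ or $x_2=u_2$) or at the total–capital boundary ($x_1=u$ or $x_2=u$). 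Substituting $u_1=\beta u$, $u_2=(1-\beta)u$, $\alpha=\beta_2/\beta_1$ and $h(x)=e^{-\beta_1 u x}$ converts all these exponentials into values of $h$: allocation–boundary products give $h(\beta)$, $h(\alpha-\alpha\beta)$, $h(\alpha+\beta-\alpha\beta)$, together with their "doubled" analogues $h(2\beta)$, $h(2\alpha-2\alpha\beta)$, $h(\alpha+2\beta-\alpha\beta)$, $h(2\alpha+\beta-2\alpha\beta)$, $h(2\alpha+2\beta-2\alpha\beta)$ coming from the factors $2e^{-\beta_i x_i}$, while the total–capital contributions give $h(1)$, $h(\alpha)$, $h(2)$, $h(2\alpha)$, each divided by the exponent created in the corresponding outer integration, namely by $\alpha-1$, $\alpha-2$ or $2\alpha-1$.

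Setting $P_1=P_2$ and grouping the $\theta^0$ and $\theta^1$ contributions then collapses to Equation \eqref{didier}; as a sanity check, putting $\theta=0$ must reproduce the $d=2$ independent–exponential equation of Proposition \ref{prp21}. The only genuine work is the term-by-term bookkeeping, and I expect that to be the main obstacle: one must keep track of roughly a dozen exponential summands and their signs. The hypothesis $\beta_1<\beta_2/2$, i.e.\ $\alpha>2$, is precisely what is needed here, since it forces the exponents $\beta_2-\beta_1$, $\beta_2-2\beta_1$ and $2\beta_2-\beta_1$ arising in the outer integrals to be nonzero (in particular $\beta_2\neq 2\beta_1$ avoids a degenerate integral producing a polynomial rather than an exponential), so that the antiderivatives are honest exponentials and the denominators $\alpha-1$, $\alpha-2$, $2\alpha-1$ in \eqref{didier} are well defined. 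Finally, strict convexity of $I$ guarantees that the equation thus obtained has a unique root $\beta\in[0,1]$, which gives the announced optimal allocation $(\beta u,(1-\beta)u)$.
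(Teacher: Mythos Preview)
Your proposal is correct and follows essentially the same route as the paper: both expand the FGM density as a signed sum of four product exponentials with parameters $(a\beta_1,b\beta_2)$, $(a,b)\in\{1,2\}^2$, integrate term by term over the relevant region, and equate the two sides of \eqref{ZoneO}. The only cosmetic difference is that the paper passes through the joint distribution functions $F_{X_i,S}(u_i,u)$ (so that the optimality condition becomes $F_{X_1,S}(u_1,u)=F_{X_2,S}(u_2,u)$ after cancelling $F_S(u)$), whereas you integrate the tail probabilities $P_i=\mathbb{P}(X_i>u_i,\,S\leq u)$ directly over the triangles; the bookkeeping and the resulting exponential terms are identical.
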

\begin{proof}
The proof is postponed to Appendix \ref{FGM}.
\end{proof}
\begin{remark}
In the case of $\theta =0$, we find exactly Equation (\ref{EIZO}) given by Proposition \ref{prp21} for the independent exponential distributions model. 
\end{remark}
 Equation (\ref{didier}) gives the behavior of the optimal allocation with respect to $\theta$. It may be solved numerically.\\
Figure \ref{EFGM-case1} presents an illustration of the optimal allocation variation with respect to the dependence parameter of the FGM copula. 
 \begin{figure}[H]
                \center
                \includegraphics[width=14cm]{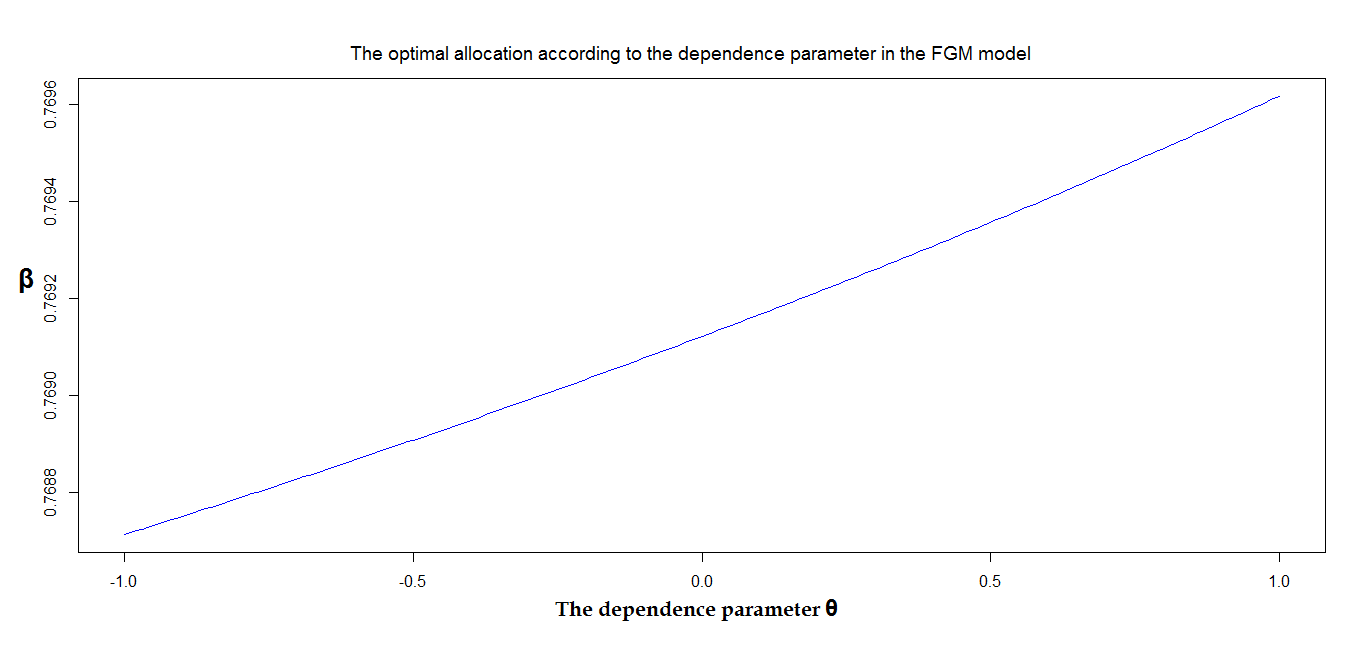} 
              	\caption{$\beta$ as a function of $\theta$. Case : $\beta_1=0.05$, $\beta_2=0.25$ , and $u=50$}
               \label{EFGM-case1}
 \end{figure}
 For the illustration parameters, we remark that $\beta$ is an increasing function of $\theta$, this can be verified analytically using the implicit function theorem. It is important to remark that the optimal allocation is also a function of the capital $u$. The variations of $\beta$ with respect to the dependence parameter depends on $u$. Figure \ref{EFGM-case2} give what we obtain as result for an allocation with the same distributions parameters but with $u=100$ as allocation capital.
  \begin{figure}[H]
                 \center
                 \includegraphics[width=14cm]{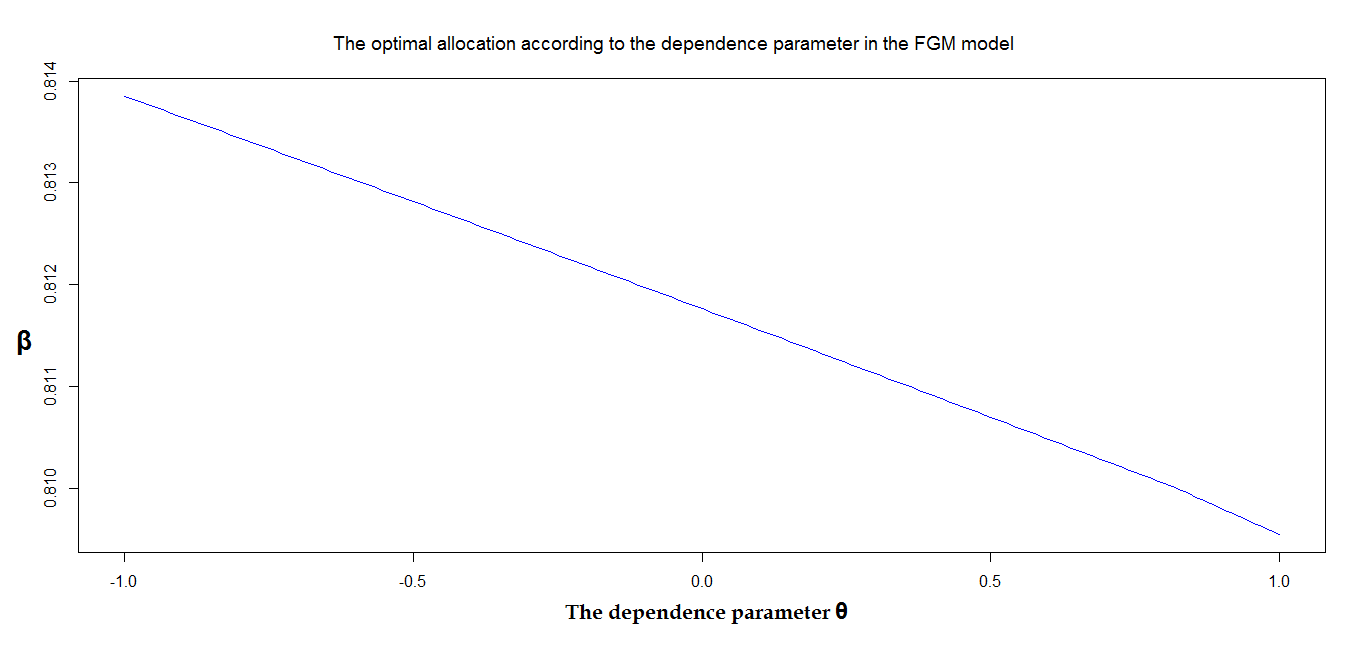} 
               	\caption{$\beta$ as a function of $\theta$. Case : $\beta_1=0.05$, $\beta_2=0.25$ , and $u=100$}
                \label{EFGM-case2}
  \end{figure}
 The optimal allocation depends on the initial capital. As an example, with the same parameters, $\beta$ is a decreasing function of $\theta$ if $u<53$, and it is an increasing function of $\theta$ for $u\geq 53$.\\ 
The variation range size of $\beta$ is very small, and its size is function of the distributions parameters and the allocation capital. That is due to the dependence structure, the FGM copula present only a light dependence. For Clayton copula as example, which is stronger dependence structure, the variation range size of $\beta$ is more important.
 \subsubsection{Marshall-Olkin Model}~~\\
 Let $Y_i\sim \exp(\lambda_i)$, with $i=0,1,2$ be three independent random variables.\\
  We construct two random variables with common shock: $X_i=\min (Y_i,Y_0)$ for $i=1,2$. $X_i$'s have exponential marginal distributions of parameters $\beta_i=\lambda_i+\lambda_0$ (see e.g. Nelsen \cite{nelsen} section 3.1.1.).\\
  This dependence construction model has as Pearson correlation coefficient
   $\rho_P=\frac{\lambda_0}{\lambda_0+\lambda_1+\lambda_2}$.\\
  The joint distribution function is given by: \begin{align*}
  \bar{F}_{X_1,X2}(x_1,x_2)&=\mathbb{P}(X_1>x_1, X_2>x_2)= \mathbb{P}(Y_1>x_1, Y_2>x_2,Y_0>\max(x_1,x_2) )\\
  &=e^{-\lambda_1x_1}e^{-\lambda_2x_2}e^{-\lambda_0\max(x_1,x_2)}\\
  &=e^{-(\lambda_0+\lambda_1)x_1}e^{-(\lambda_0+\lambda_2)x_2}e^{\lambda_0\min(x_1,x_2)}\\
  &=\bar{F}_{X_1}(x_1)\bar{F}_{X_2}(x_2)e^{\lambda_0\min(x_1,x_2)}\/,
   \end{align*} 
   and the joint density function is the following:
   \[  f_{X_1,X_2}(x_1,x_2)=\left\lbrace 
   \begin{array}{lcl} 
   f^1_{X_1,X_2}(x_1,x_2)=\beta_1e^{-\beta_1 x_1}(\beta_2-\lambda_0)e^{-(\beta_2-\lambda_0)x_2} & si & x_1>x_2\\ 
   f^2_{X_1,X_2}(x_1,x_2)=(\beta_1-\lambda_0)e^{-(\beta_1-\lambda_0)x_1}\beta_2e^{-\beta_2 x_2} & si & x_1<x_2\\ 
   f^0_{X_1,X_2}(x_1,x_2)=\lambda_0e^{-\beta_1x} e^{-\beta_2x}e^{\lambda_0x}& si & x_1=x_2=x 
   \end{array}\right.\/.\]
   \begin{Proposition}[The optimal capital allocation for the indicator $I$ in the Marshall-Olkin Model]
   We suppose that $\lambda_1<\lambda_2$. The optimal allocation of a capital $u$ minimizing the indicator $I$ is given by $(\beta u,(1-\beta)u)$,
   such that $\beta=u_1/u$ is the unique solution in $[0,1]$ of the following equation:
   \label{MOeq}
   \begin{align*}
                g(\beta_2(1-\beta))-g(\beta_1\beta)+\frac{\beta_1}{\beta_1-\lambda_2}g((\beta_1-\lambda_2)\beta+\lambda_2)+\frac{\lambda_2}{\beta_1-\lambda_2}g((\lambda_2-\beta_1)(1-\beta)+\beta_1)\\-\frac{\lambda_1}{\lambda_1-\beta_2}g(\beta_2)
                   =\frac{\lambda_2}{\beta_1-\lambda_2}g(\beta_1)+g(\lambda_s/2)[\frac{\beta_1}{\beta_1-\lambda_2}-\frac{\lambda_1}{\lambda_1-\beta_2}]
                   \/, \end{align*}  
       where, $\lambda_s=\lambda_0+\lambda_1+\lambda_2$, and $g$ is the function $g(x)= \exp(-ux)$.
  \end{Proposition}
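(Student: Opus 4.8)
The plan is to start from the optimality condition \eqref{ZoneO}, which for $d=2$ reads $\mathbb{P}(X_1>u_1,\,S\le u)=\mathbb{P}(X_2>u_2,\,S\le u)$ with $S=X_1+X_2$, and to evaluate each probability against the Marshall--Olkin law. Recall that the distribution of $(X_1,X_2)$ splits into two absolutely continuous pieces, $f^1$ on $\{x_1>x_2\}$ and $f^2$ on $\{x_1<x_2\}$, together with a singular component carried by the diagonal $\{x_1=x_2\}$ whose one-dimensional density is $\lambda_0 e^{-\lambda_s x}$. Throughout I would use $\lambda_1=\beta_1-\lambda_0$, $\lambda_2=\beta_2-\lambda_0$ and $\lambda_s=\lambda_0+\lambda_1+\lambda_2=\beta_1+\beta_2-\lambda_0$, and the substitution $u_1=\beta u$, $u_2=(1-\beta)u$, $g(x)=e^{-ux}$; every exponential that arises then collapses to a value of $g$, which is how the compact left- and right-hand sides are produced.

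First I would fix the geometry of the two integration domains. Since $\lambda_1<\lambda_2$ gives $\beta_1<\beta_2$, the variable $X_1$ stochastically dominates $X_2$, and the monotonicity of the optimal allocation established in \cite{P12015} yields $u_1\ge u_2$, hence $u_1>u/2>u_2$. This ordering is the key simplification: on $\{X_1>u_1,\,S\le u\}$ one has $x_1>u/2>u-x_1\ge x_2$, so the whole domain lies strictly below the diagonal, only $f^1$ contributes and there is no singular mass; whereas on $\{X_2>u_2,\,S\le u\}$ the domain straddles the diagonal, so all three components are active ($f^2$ on the lower triangle $\{0\le x_1<x_2,\ x_2>u_2,\ x_1+x_2\le u\}$, $f^1$ on the thin region $\{x_2<x_1\le u-x_2,\ u_2<x_2<u/2\}$, and the singular mass on $u_2<x<u/2$).

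Next I would carry out the two evaluations. The computation $\mathbb{P}(X_1>u_1,\,S\le u)=\int_{u_1}^{u}\beta_1 e^{-\beta_1 x_1}\big(1-e^{-\lambda_2(u-x_1)}\big)\,dx_1$ is a single elementary integral and, after the substitution above, produces exactly $g(\beta_1\beta)-\tfrac{\beta_1}{\beta_1-\lambda_2}\,g((\beta_1-\lambda_2)\beta+\lambda_2)$ together with the boundary terms in $g(\beta_1)$. For $\mathbb{P}(X_2>u_2,\,S\le u)$ I would integrate $f^2$ over the lower triangle, integrate $f^1$ over the thin region, and add the diagonal contribution $\int_{u_2}^{u/2}\lambda_0 e^{-\lambda_s x}\,dx$: the last integral is the source of the $g(\lambda_s/2)$ terms, while the $f^2$ integral yields the $g(\beta_2(1-\beta))=\bar F_{X_2}(u_2)$ term. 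Collecting the two sides of \eqref{ZoneO} and rewriting every exponential through $g$ then gives the stated equation.

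The main obstacle is the bookkeeping in the second probability: the partial-fraction constants $\tfrac{\beta_1}{\beta_1-\lambda_2}$, $\tfrac{\lambda_2}{\beta_1-\lambda_2}$ and $\tfrac{\lambda_1}{\lambda_1-\beta_2}$ (all finite under $\lambda_1<\lambda_2$ and $\beta_1\neq\lambda_2$) must be tracked through several nested exponential integrals, and the boundary values at $x_2=u_2$, $x_2=u/2$ and $x_2=u$ must be combined so that the surplus contributions at $x=u/2$ assemble into the single coefficient $\big[\tfrac{\beta_1}{\beta_1-\lambda_2}-\tfrac{\lambda_1}{\lambda_1-\beta_2}\big]g(\lambda_s/2)$ appearing on the right. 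Finally, uniqueness of $\beta$ in $[0,1]$ follows from the strict convexity of $I$ recalled in Section~1 (equivalently, from the strict monotonicity in $\beta$ of $\mathbb{P}(X_1>u_1,\,S\le u)-\mathbb{P}(X_2>u_2,\,S\le u)$), so the displayed equation determines a single admissible allocation.
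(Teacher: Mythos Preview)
Your proposal is correct and follows essentially the same route as the paper: both use the monotonicity property to pin down $\beta>1/2$ (so $u_1>u/2>u_2$), split the Marshall--Olkin law into its two absolutely continuous pieces plus the diagonal mass, and integrate over the resulting regions. The only cosmetic difference is that the paper computes the joint distribution functions $F_{X_i,S}(u_i,u)$ and equates them, whereas you compute the complementary probabilities $\mathbb{P}(X_i>u_i,\,S\le u)$ directly; since these differ by the common term $F_S(u)$, the two computations are equivalent, and your version is arguably a bit more economical because the geometric restriction $u_1>u/2$ is exploited from the outset rather than after deriving $F_{X_1,S}$ in both cases.
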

  \begin{proof}
  The proof is postponed to Appendix \ref{MO}.
  \end{proof}
   \begin{remark}
   In the case $\lambda_0=0$ which is the independence case, we find again Equation~(\ref{EIZO}) given by Proposition \ref{prp21}.  
   \end{remark}
 We can consider $\lambda_0$ as a dependence parameter in this model. Figure \ref{MO-Corr} presents an illustration of the variation of $\beta$ as a function of $\lambda_0$.   
  \begin{figure}[H]
                       \center
                       \includegraphics[width=14cm]{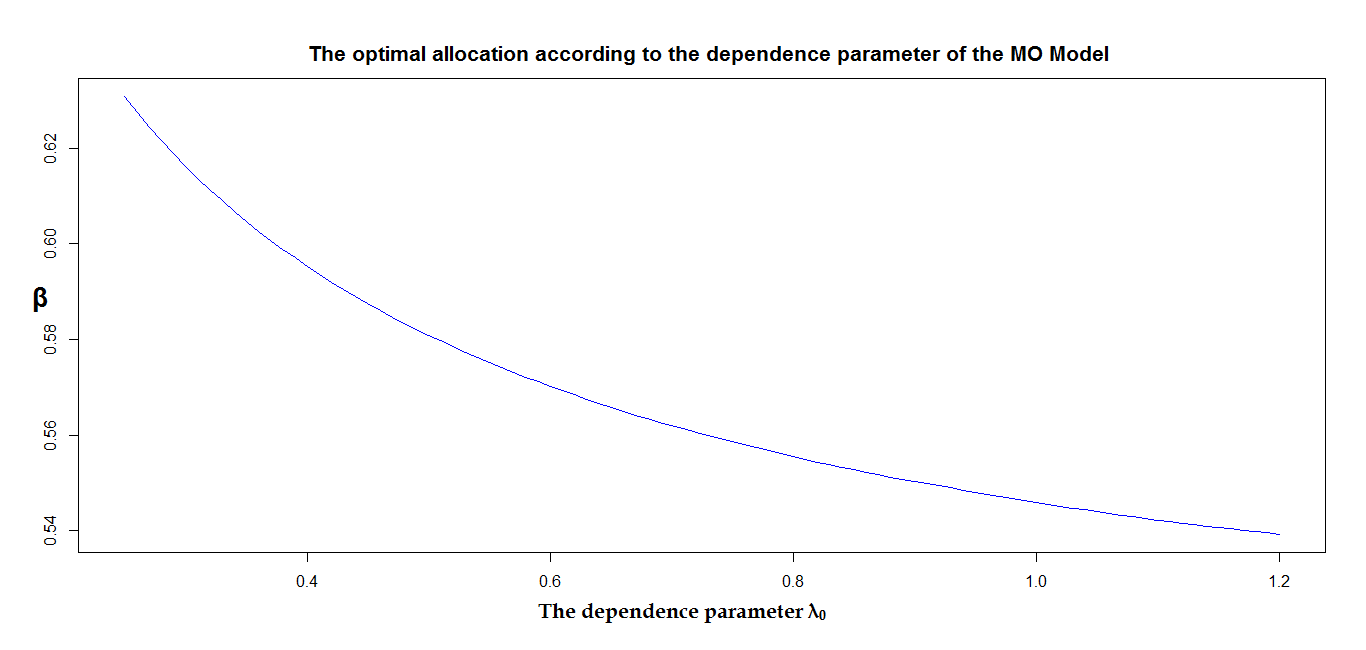} 
                     	\caption{$\beta$ as a function of $\lambda_0$ : $\beta_1=0.05$, $\beta_2=0.25$ and $u=50$. }
                      \label{MO-Corr}
        \end{figure}
  One can notice that $\beta$ is a decreasing function of $\lambda_0$. This is coherent with the increase of $\beta $ as a function of $\alpha =\frac{\beta_2}{\beta_1}=\frac{\lambda_2+\lambda_0}{\lambda_1+\lambda_0}$ demonstrated in \cite {AR1} in the independence case, since the two risks are independent, conditionally to  $Y_0 $.
 \section*{Conclusion}
  In this article, we have studied the allocation asymptotic behavior based on the level of the group capital. It has enabled us to build an idea of the capital level impact on the sensitivity of its allocation between branches. The comparison between the asymptotic optimal allocation, in the case of sub-exponential and exponential distributions, underscores the impact of the risks nature on the behavior of the allocation for a very large capital.\\
  
  Compared to the classical risk allocation methods, the allocation by minimizing multivariate risk indicator take into account the capital level. This seems to be a more acceptable behavior in a capital allocation operation.\\
  
  The risk aggregation transforms the global risk portfolio in a univariate risk. At the opposite, the allocation is based on a multivariate analysis. The goal of this paper is to highlight the importance of the dependence modeling in the success of an allocation capital operation. The capital allocation is also sensitive to the risks nature, we tried in this paper to make more evident the impact of distributions' nature on the allocation behavior. Since the capital allocation is an important financial decision, especially for groups, insurers must be very careful in their risk and dependence modeling choices to get an efficient allocation.
  
\bibliographystyle{plain}
\bibliography{pap1} 

\begin{thebibliography}{10}

\bibitem{RuinP}
S.~Asmussen.
\newblock {\em Ruin probabilities. Advanced series on statistical science \&
  applied probability, Volume 2}.
\newblock 2000.

\bibitem{MarceauTvar}
M.~Barg{\`e}s, H.~Cossette, and E.~Marceau.
\newblock Tvar-based capital allocation with copulas.
\newblock {\em Insurance: Mathematics and Economics}, 45(3):348--361, 2009.

\bibitem{Borch1962}
K.~Borch.
\newblock {Equilibrium in a reinsurance market.}
\newblock {\em Econometrica}, (30):424–444, 1962.

\bibitem{Cai}
J.~Cai and H.~Li.
\newblock Dependence properties and bounds for ruin probabilities in
  multivariate compound risk models.
\newblock {\em Journal of Multivariate Analysis}, 98(4):757 -- 773, 2007.

\bibitem{MarceauTvar2}
H.~Cossette, M.~Mailhot, and E.~Marceau.
\newblock Tvar-based capital allocation for multivariate compound distributions
  with positive continuous claim amounts.
\newblock {\em Insurance: Mathematics and Economics}, 50(2):247--256, 2012.

\bibitem{Elena2013}
A.~Cousin and E.~Di~Bernardino.
\newblock On multivariate extensions of value-at-risk.
\newblock {\em Journal of Multivariate Analysis}, 119:32--46, 2013.

\bibitem{cramer1930l}
H.~Cram{\'e}r.
\newblock {\em On the mathematical theory of risk}.
\newblock Centraltryckeriet, 1930.

\bibitem{AR1}
P.~Cénac, S.~Loisel, V.~Maume-Deschamps, and C.~Prieur.
\newblock {Risk indicators with several lines of business: comparison,
  asymptotic behavior and applications to optimal reserve allocation}.
\newblock {\em {Annales de l'ISUP}}, 58(3), 2014.

\bibitem{AR2}
P.~Cénac, V.~Maume-Deschamps, and C.~Prieur.
\newblock {Some Multivariate risk indicators: Minimization by using a
  Kiefer-Wolfowitz approach to the mirror stochastic algorithm }.
\newblock {\em Statistics and Risk Modeling}, 29(1):47--71, 2012.

\bibitem{Denault2001}
M.~Denault.
\newblock Coherent allocation of risk capital.
\newblock {\em Journal of risk}, 4:1--34, January 2001.

\bibitem{AR3}
J.~Dhaene, E.A. Valdez, A.~Tsanakas, and S.~Vanduffel.
\newblock {Optimal capital allocation principles}.
\newblock {\em The Journal of Risk and Insurance}, 79(1):1--28, 2012.

\bibitem{Frechet1951}
M.~Fréchet.
\newblock {Sur les tableaux de corrélation dont les marges sont données.}
\newblock {\em Annales de l'université de Lyon, Section.A, Series 3.},
  (14):53--77, 1951.

\bibitem{Hoeff1940}
W.~Hoeffding.
\newblock {Masstabinvariante Korrelationstheorie}.
\newblock {\em Schriften des mathematischen Instituts und des Instituts für
  angewandte Mathematik der Universität Berlin}, (5):179--233, 1940.

\bibitem{Mbt}
H.~Hult and F.~Lindskog.
\newblock Heavy-tailed insurance portfolios: buffer capital and ruin
  propabilities.
\newblock Technical Report 1441, School of ORIE, Cornell University, January
  2006.

\bibitem{jouini2004}
E.~Jouini, M.~Meddeb, and N.~Touzi.
\newblock Vector-valued coherent risk measures.
\newblock {\em Finance and Stochastics}, 8(4):531--552, 2004.

\bibitem{lundberg1903}
F.~Lundberg.
\newblock {\em 1. Approximerad framst{\"a}llning af sannolikhetsfunktionen: 2.
  {\AA}terf{\"o}rs{\"a}kring af kollektivrisker. Akademisk afhandling}.
\newblock Almqvist \&Wiksells, 1903.

\bibitem{P12015}
V.~{Maume-Deschamps}, D.~{Rulli{\`e}re}, and K.~{Said}.
\newblock {A risk management approach to capital allocation}.
\newblock {\em ArXiv e-prints 1506.04125}, June 2015.

\bibitem{nelsen}
R.B. Nelsen.
\newblock {\em An introduction to copulas}.
\newblock Springer, 2007.

\bibitem{oakes1989bivariate}
D.~Oakes.
\newblock {Bivariate survival models induced by frailties}.
\newblock {\em Journal of the American Statistical Association},
  84(406):487--493, 1989.

\bibitem{OrdreSto}
M.~Shaked and J.G. Shanthikumar.
\newblock {\em Stochastic Orders}.
\newblock Springer Series in Statistics, 2007.

\bibitem{Euler2}
D.~Tasche.
\newblock Euler allocation: Theory and practice.
\newblock Technical Report arXiv:0708.2542, Aug 2007.

\bibitem{Yeh2007}
H-C. Yeh.
\newblock {The frailty and the Archimedean structure of the general
  multivariate Pareto distributions}.
\newblock {\em Bulletin Institute of Mathematics Acedemia Sinica},
  2(3):713--729, 2007.

\end{thebibliography}
\appendix
\section{Proofs}
\subsection{Proof of Proposition \ref{efgm}}
\label{FGM}
\begin{proof}
The bivariate density function is the following: 
\begin{align*} 
f_{X_1,X_2}(x_1,x_2)&= (1+\theta)f(x_1,x_2,\beta_1,\beta_2)+\theta f(x_1,x_2,2\beta_1,2\beta_2)\\
&-\theta f(x_1,x_2,2\beta_1,\beta_2)-\theta f(x_1,x_2,\beta_1,2\beta_2)f_{X_1,X_2}(x_1,x_2)\/,
\end{align*} 
where $f$ is the function $f(x,t,a,b)=abe^{-ax}e^{-bt}$.\\
We use the equality: $f_{X_1,S=X_i+X_2}(x_1,s)=f_{X_1,X_2}(x_1,s-x_1)1\!\!1_{\{s\geq x_1\}}$
 for all $s\geq x_1$,
to find the expression of $F_{X_1,S}(x_1,s)$, using a double integration:
\begin{align*}
F_{X_1,S}(x_1,s)&=\int_{0}^{s}\int_{0}^{x_1}f_{X_1,X_2}(x,t-x)1\!\!1_{\{t\geq x\}}dx dt
=\int_{0}^{x_1}\int_{x}^{s}f_{X_1,X_2}(x,t-x)dtdx\\
&=(1+\theta)F(x_1,s,\beta_1,\beta_2)+\theta F(x_1,s,2\beta_1,2\beta_2)
-\theta F(x_1,s,2\beta_1,\beta_2)-\theta F(x_1,s,\beta_1,2\beta_2)\/,
\end{align*}
where $F$ is the following function:
\[F(x_1,s,a,b)=\int_{0}^{x_1}\int_{x}^{s}abe^{-(a-b)x}e^{-bt}dtdx=1-e^{-ax_1}+\frac{a}{b-a}e^{-bs}-\frac{a}{b-a}e^{-bs+(b-a)x_1} \/. \]
The same way and by the symmetry of the FGM model:
\[ F_{X_2,S}(x_2,s)=(1+\theta)F(x_2,s,\beta_2,\beta_1)+\theta F(x_2,s,2\beta_2,2\beta_1)
-\theta F(x_2,s,2\beta_2,\beta_1)-\theta F(x_2,s,\beta_2,2\beta_1) \/.\]
Using $\mathbb{P}(X_i>u_i, S\leq u)= \mathbb{P}(S\leq u) - \mathbb{P}(X_i\leq u_i, S\leq u)$, the optimal allocation is the unique solution in $\mathcal{U}^2_u$ of the equation: $ F_{X_1,S}(u_1,u)=F_{X_2,S}(u_2,u)$. Then, the optimal allocation is determined by $\beta$ the solution of the equation: $F_{X_1,S}(\beta u,u)=F_{X_2,S}((1-\beta) u,u)$.\\
Since, 
\begin{align*}
F_{X_1,S}(\beta u,u)&=1+4\theta-(1+2\theta)h(\beta)-2\theta h(2\beta)\\
&+(1+\theta)\frac{1}{\alpha-1}[h(\alpha)-h(\alpha+\beta-\alpha\beta)]+\theta\frac{1}{\alpha-1}[h(2\alpha)-h(2\alpha+2\beta-2\alpha\beta)]\\
&-\theta\frac{2}{\alpha-2}[h(\alpha)-h(\alpha+2\beta-\alpha\beta)]-\theta\frac{1}{2\alpha-1}[h(2\alpha)-h(2\alpha+\beta-2\alpha\beta)]\/,
\end{align*}
and, 
\begin{align*}
F_{X_2,S}((1-\beta) u,u)&=1+4\theta-(1+2\theta)h(\alpha(1-\beta))-2\theta h(2\alpha(1-\beta))\\
&+(1+\theta)\frac{\alpha}{1-\alpha}[h(1)-h(\alpha+\beta-\alpha\beta)]+\theta\frac{\alpha}{1-\alpha}[h(2)-h(2\alpha+2\beta-2\alpha\beta)]\\
&-\theta\frac{2\alpha}{1-2\alpha}[h(1)-h(2\alpha+\beta-2\alpha\beta)]-\theta\frac{\alpha}{2-\alpha}[h(2)-h(\alpha+2\beta-\alpha\beta)]\/,
\end{align*}
we deduce from that the equation presented in the proposition \ref{efgm}.
\end{proof}
\subsection{Proof of Proposition \ref{MOeq}}
\label{MO}
\begin{proof}
The joint distribution function is given by:
   \begin{align*}
      F_{X_1,S}(x_1,s)&=\int_{0}^{s}\int_{0}^{x_1}f_{X_1,S}(x,t-x)1\!\!1_{\{t>x\}}dxdt\\
      &=\int_{0}^{s}\int_{0}^{x_1}f^1_{X_1,S}(x,t-x)1\!\!1_{\{2x>t>x\}}dxdt+\int_{0}^{s}\int_{0}^{x_1}f^2_{X_1,S}(x,t-x)1\!\!1_{\{t>2x\}}dxdt\\&+\int_{0}^{s}\int_{0}^{x_1}f^0_{X_1,S}(x,t-x)1\!\!1_{\{t=2x\}}dxdt\/.
         \end{align*}
   we distinguish between two cases:
   \paragraph*{Case $s>2x_1$:} in this case, 
    \begin{align*}
       \int_{0}^{s}\int_{0}^{x_1}f^1_{X_1,S}(x,t-x)1\!\!1_{\{2x>t>x\}}dxdt&= \int_{2x_1}^{s}\int_{0}^{x_1}f^1_{X_1,S}(x,t-x)1\!\!1_{\{2x>t>x\}}dxdt\\&+\int_{0}^{2x_1}\int_{0}^{x_1}f^1_{X_1,S}(x,t-x)1\!\!1_{\{2x>t>x\}}dxdt\\
       &=\int_{0}^{2x_1}\int_{0}^{x_1}f^1_{X_1,S}(x,t-x)1\!\!1_{\{2x>t>x\}}dxdt\\
       &=\int_{0}^{2x_1}\int_{t/2}^{min(x_1,t)}f^1_{X_1,S}(x,t-x)dxdt\\
       &=\int_{0}^{x_1}\int_{t/2}^{t}f^1_{X_1,S}(x,t-x)dxdt+\int_{x_1}^{2x_1}\int_{t/2}^{x_1}f^1_{X_1,S}(x,t-x)dxdt\/,
          \end{align*}
   and,  \begin{align*}
          \int_{0}^{s}\int_{0}^{x_1}f^2_{X_1,S}(x,t-x)1\!\!1_{\{t>2x\}}dxdt&=\int_{0}^{x_1}\int_{0}^{s}f^2_{X_1,S}(x,t-x)1\!\!1_{\{t>2x\}}dtdx=\int_{0}^{x_1}\int_{2x}^{s}f^2_{X_1,S}(x,t-x)dtdx\/,
       \end{align*}
   and, \begin{align*}
             \int_{0}^{s}\int_{0}^{x_1}f^0_{X_1,S}(x,t-x)1\!\!1_{\{t=2x\}}dxdt&= \frac{\lambda_0}{\lambda_0+\lambda_1+\lambda_2}(1-e^{-(\lambda_0+\lambda_1+\lambda_2)x_1})\/,
          \end{align*}
   then, we deduce the explicit expression of $F_{X_1,S}(x_1,s)$:
   \begin{align*}
     F_{X_1,S}(x_1,s)&=\frac{2\beta_1\lambda_2}{(\beta_1-\lambda_2)(\beta_1+\lambda_2)}(1-e^{-(\beta_1+\lambda_2)x_1})-\frac{\lambda_2}{\beta_1-\lambda_2}(1-e^{-\beta_1x_1})-\frac{\beta_1}{\beta_1-\lambda_2}(e^{-\beta_1x_1}-e^{-(\beta_1+\lambda_2)x_1})\\
     &+\frac{\lambda_1}{\lambda_1+\beta_2}(1-e^{-(\lambda_1+\beta_2)x_1})-\frac{\lambda_1}{\lambda_1-\beta_2}e^{-\beta_2s}+\frac{\lambda_1}{\lambda_1-\beta_2}e^{-(\lambda_1-\beta_2)x_1-\beta_2s}\\&+\frac{\lambda_0}{\lambda_0+\lambda_1+\lambda_2}(1-e^{-(\lambda_0+\lambda_1+\lambda_2)x_1})\/.
   \end{align*} 
    \paragraph*{Case $2x_1>s>x_1$ :}
    \begin{align*}
        \int_{0}^{s}\int_{0}^{x_1}f^1_{X_1,S}(x,t-x)1\!\!1_{\{2x>t>x\}}dxdt&=\int_{0}^{s}\int_{t/2}^{min(x_1,t)}f^1_{X_1,S}(x,t-x)dxdt\\
        &=\int_{0}^{x_1}\int_{t/2}^{t}f^1_{X_1,S}(x,t-x)dxdt+\int_{x_1}^{s}\int_{t/2}^{x_1}f^1_{X_1,S}(x,t-x)dxdt\/,
           \end{align*} 
        and,
        \begin{align*}
           \int_{0}^{s}\int_{0}^{x_1}f^2_{X_1,S}(x,t-x)1\!\!1_{\{t>2x\}}dxdt&=\int_{0}^{s}\int_{0}^{t/2}f^2_{X_1,S}(x,t-x)dxdt\/,
        \end{align*}
        and,
        \begin{align*}
                     \int_{0}^{s}\int_{0}^{x_1}f^0_{X_1,S}(x,t-x)1\!\!1_{\{t=2x\}}dxdt&= \frac{\lambda_0}{\lambda_0+\lambda_1+\lambda_2}(1-e^{-(\lambda_0+\lambda_1+\lambda_2)s/2})\/,
                  \end{align*}
           then, we deduce also in this case, the explicit expression of $F_{X_1,S}(x_1,s)$: 
       \begin{align*}
              F_{X_1,S}(x_1,s)&=\frac{2\beta_1\lambda_2}{(\beta_1-\lambda_2)(\beta_1+\lambda_2)}(1-e^{-(\beta_1+\lambda_2)s/2})-\frac{\lambda_2}{\beta_1-\lambda_2}(1-e^{-\beta_1x_1})\\&-\frac{\beta_1}{\beta_1-\lambda_2}(e^{-\beta_1x_1}-e^{-(\beta_1-\lambda_2)x_1-\lambda_2s})
              +\frac{\lambda_1}{\lambda_1-\beta_2}(1-e^{-\beta_2s})\\&-\frac{2\lambda_1\beta_2}{(\lambda_1-\beta_2)(\lambda_1+\beta_2)}(1-e^{-(\lambda_1+\beta_2)s/2})+\frac{\lambda_0}{\lambda_0+\lambda_1+\lambda_2}(1-e^{-(\lambda_0+\lambda_1+\lambda_2)s/2})\/.
            \end{align*}
         We remark that $\lambda_0+\lambda_1+\lambda_2=\lambda_1+\beta_2=\lambda_2+\beta_1$, and we suppose that $\lambda_1>\lambda_2$. Using the monotony property, we deduce that $1>\beta>1/2$, then $2\beta u>u>\beta u$. So, for $u_1=\beta u$, and $g(x)=\exp(-xu)$:
         \[ F_{X_1,S}(\beta u,u) = 1-g(\beta_1\beta)-\frac{\lambda_1}{\lambda_1-\beta_2}g(\beta_2)+\frac{\beta_1}{\beta_1-\lambda_2}g((\beta_1-\lambda_2)\beta+\lambda_2)+g(\lambda_s/2)[\frac{\lambda_1}{\lambda_1-\beta_2}-\frac{\beta_1}{\beta_1-\lambda_2}]\/, \]
         and, 
         \[ F_{X_2,S}((1-\beta)u,u) = 1-g(\beta_2(1-\beta)+\frac{\lambda_2}{\lambda_2-\beta_1}[g((\lambda_2-\beta_1)(1-\beta)+\beta_1)-g(\beta_1)] \/.\]
          That is sufficient to get \ref{MOeq}.         
   \end{proof}
   \end{document}